\newtheorem{theorem}{Theorem}[section]
\newtheorem{proposition}[theorem]{Proposition}
\newtheorem{lemma}[theorem]{Lemma}
\newtheorem{remark}[theorem]{Remark}
\newtheorem{definition}[theorem]{Definition}
\newenvironment{notation}{\smallskip{\sc Notation.}\rm}{\smallskip}
\def\mcE{\mathcal{E}}
\def\mcF{\mathcal{F}}
\def\mcV{\mathcal{V}}
\numberwithin{equation}{section}
\begin{document}
\title[eigenvalues of Laplacians on higher dimensional Vicsek set graphs]{eigenvalues of Laplacians on higher dimensional Vicsek set graphs}

\author{Shiping Cao}
\address{Department of Mathematics, Cornell Univeristy, Ithaca 14853, USA}
\email{sc2873@cornell.edu}
\thanks{}

\author{Robert S. Strichartz}
\address{Department of Mathematics, Cornell Univeristy, Ithaca 14853, USA}
\email{str@math.cornell.edu}
\thanks{}

\author{Melissa Wei}
\address{Department of Mathematics, Cornell Univeristy, Ithaca 14853, USA}
\email{mlw292@cornell.edu}
\thanks{}

\subjclass[2010]{Primary 28A80}

\date{}

\keywords{Vicsek set, eigenvalues, isomorphism of lattices}

\begin{abstract}
	We study the graphs associated with Vicsek sets in higher dimensional settings. First, we study the eigenvalues of the Laplacians on the approximating graphs of the Vicsek sets, finding a general spectral decimation function. This is an extension of earlier results on two dimensional Vicsek sets. Second, we study the Vicsek set lattices, which are natural analogues to the Sierpinski lattices. We have a criterion when two different Vicsek set lattices are isomorphic. 
\end{abstract}
\maketitle

\section{Introduction}
	The Vicsek sets are among the simplest examples of p.c.f. self-similar sets. In some sense, we can view the Vicsek sets $\mathcal{V}_n^d$ as trees rooted at the central point $p_0$, with $2^d$ identical branches (connected components of $\mathcal{V}_n^d/\{p_0\}$), where $d$ is the dimension. See Figure \ref{fig1} for examples of Vicsek sets. As a consequence, the Vicsek set has full $S_{2^d}$ symmetry. Moreover, we can see infinitely many isometries on the Vicsek sets, which only interchange points in a small cell (see \cite{KL,HM}). 

\begin{figure}[htp]
	\includegraphics[width=4cm]{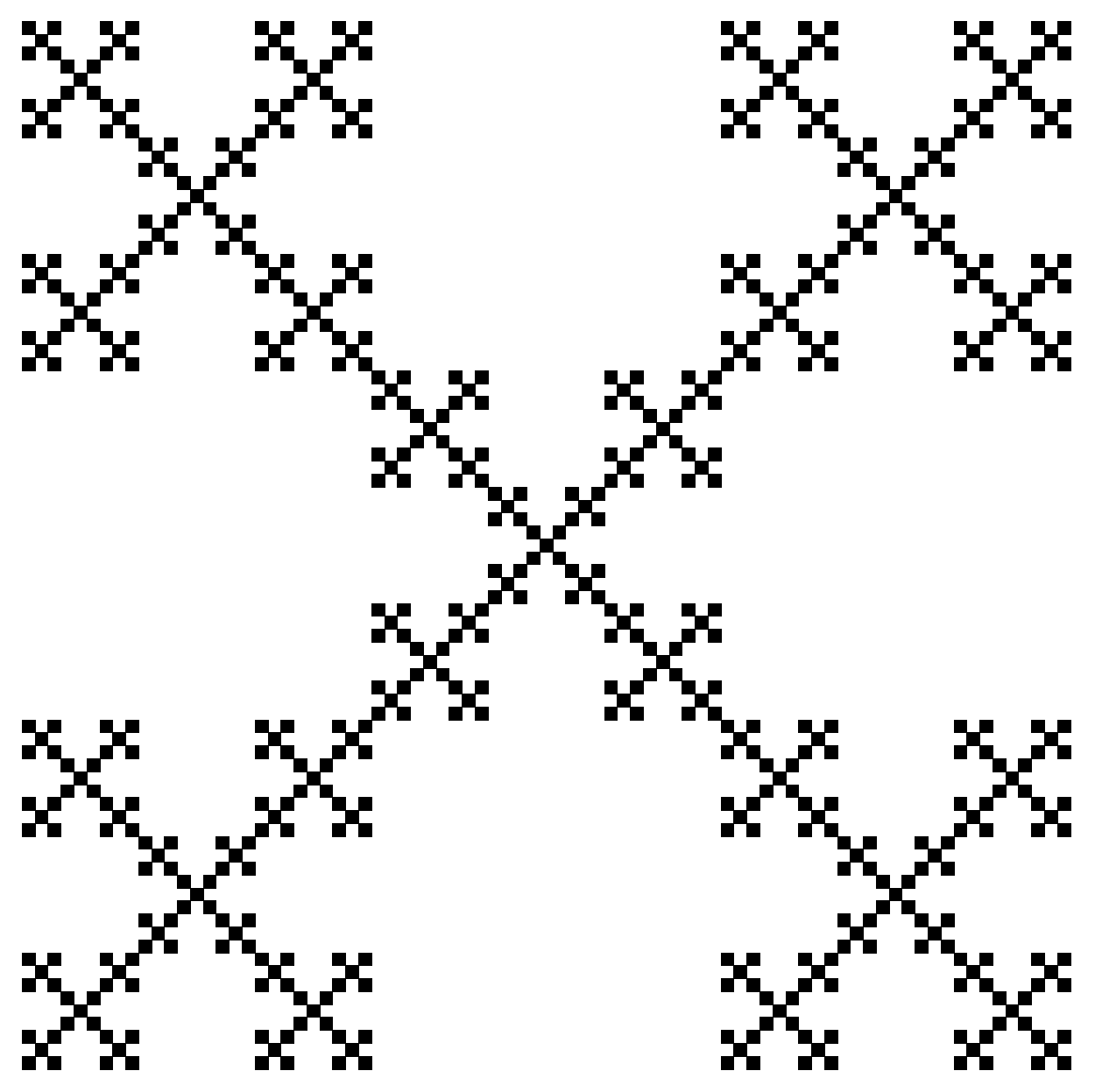}\quad
	\includegraphics[width=4cm]{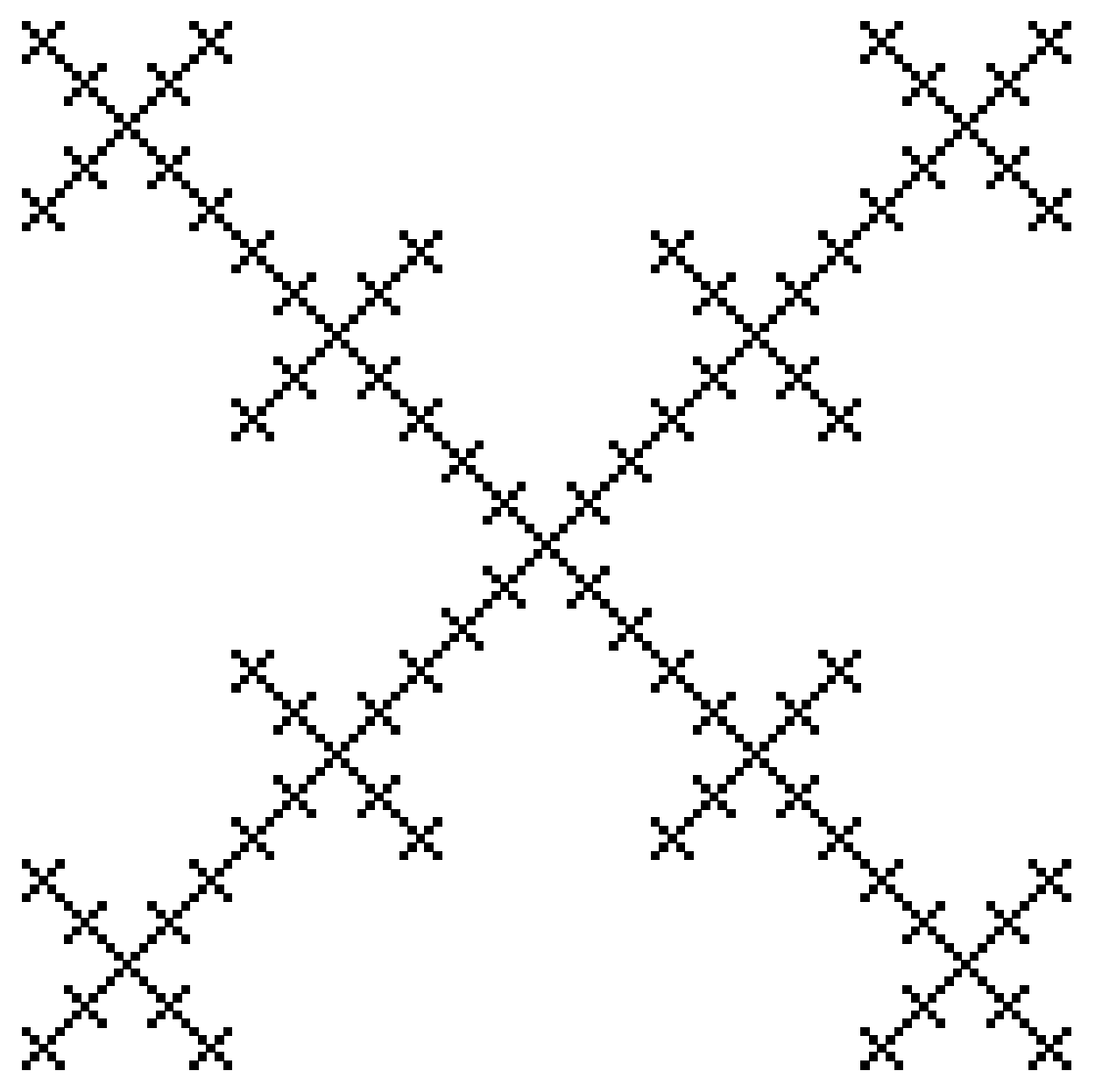}\quad
	\includegraphics[width=4.5cm]{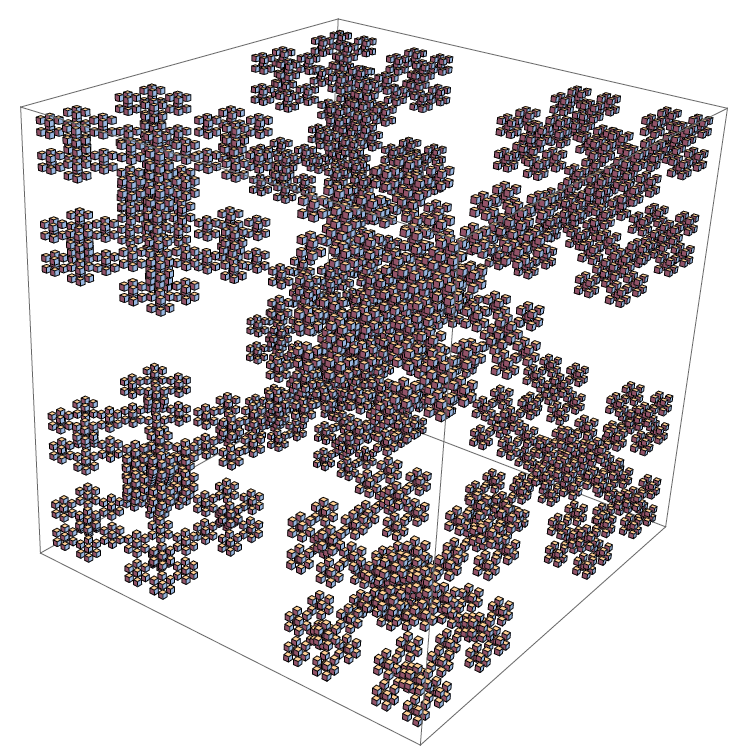}
	\caption{The Vicsek sets $\mathcal{V}_2^2$, $\mathcal{V}_3^2$ and $\mathcal{V}_2^3$.}\label{fig1}
\end{figure}

The simple structure of Vicsek sets leads to many notable results. In particular, in \cite{Z1}, the eigenvalues of the Laplacian on $2$-dimensional Vicsek sets are studied in detail; also, see \cite{BCDEHKMST1} for an earlier work doing research on the fractal $3-$tree. In the first part of our paper, we will follow their idea, and extend some of their results to higher dimensional settings. The method, the spectral decimation recipe, has been a routine argument to solve eigenvalue problems of the Laplacian and was introduced by Shima and Fukushima. See \cite{FS} and \cite{tS} for the celebrated work. For the precise definition of the Laplacian, readers can refer to the books \cite{B,Ki3,S4}, and also read the papers \cite{Ki1,Ki2,L}. One interesting fact is that we can define infinitely many different Dirichlet forms on a Vicsek set even if the renormalization factors are fixed \cite{M}. Also see \cite{HM} for a more general class of tree like Vicsek sets. Finally, readers can find more results about eigenvalues problems in \cite{BCDEHKMST1,BCDEHKMST2,BK,BS,CT,DRS,FKS,HSTZ,MT,ORS,S1,S3,ST,T,Z1,Z2}, including the results of eigenvalue counting functions in \cite{KL}.  

In the second part of the paper, we study the infinite Vicsek set lattices, which are natural generalizations of the Sierpinski lattices.  In \cite{T}, A. Teplyaev showed that two Sierpinski lattices are isomorphic if and only if the generating sequences have the same tail (up to a permutation of $1,2,3$). The result is a consequence of the topological rigidity of the Sierpinski gaskets. Readers also see \cite{BP,S0,S2,ST,T} for results relating to the blow up of the Sierpinski gaskets and other related lattices. On the other hand, there are infinitely many different self-isometries on the Vicsek sets. It is of interest to see when two Vicsek set lattices are equivalent up to isomorphism. The problem will be harder than the Sierpinski gasket case, and the critical observation is that the paths between the center of different level of finite approximation of the lattice determine the structure of the graph.

A brief outline of the paper is as follows. In Section 2, we will introduce notations, including the definition of Vicsek sets $\mathcal{V}_n^d$. In Section 3 and 4, we will deal with the eigenvalues of the discrete Laplacians. In particular, in Section 3 we show the spectral decimation recipe and in Section 4 we count the Dirichlet and Neumann eigenvalues. In Section 5, we study the Vicsek set lattices.

\section{The Vicsek set $\mcV_n^d$.}
In this section, we introduce the class of Vicsek sets $\mcV_n^d$, where $n\geq 2, d\geq 2$, and briefly review the defintion of Laplacians.

To begin with, we fix a $d$-dimensional cube $[0,1]^d\subset \mathbb{R}^d$, and let $\{q_i\}_{i=1}^{2^d}$ be the boundary vertices of the cube, i.e. $\{q_i\}_{i=1}^{2^d}=\{0,1\}^d$.  An ordering of the vertices is not very important, since the Vicsek set has the full $S_{2^d}$ symmetry, but for concreteness, we set the following rule $i\in\{1,2,\cdots, 2^d\}\to \{0,1\}^d$:
\[q_i=(a_1,a_2,a_3,\cdots,a_d)\in \{0,1\}^d,\quad \text{ if and only if }i=1+\sum_{l=1}^d 2^{l-1}a_l.\]
Let $q_0=(\frac12,\frac12,\cdots)$ be the center of the cube, we define the iterated function system (i.f.s.) as follows,
\[F_{i,j}(x)=\frac{1}{2n-1}x+\frac{2n-2}{2n-1}\big(\frac {n-j}{n-1} q_i+\frac {j-1}{n-1} q_0\big),\quad \text{ for }1\leq i\leq 2^d,1\leq j\leq n.\]
The Vicsek set $\mcV_n^d$ is the unique compact set in $\mathbb{R}^d$ such that 
\[\mcV_n^d=\bigcup_{i=1}^{2^d}\bigcup_{j=1}^n F_{i,j}(\mcV_n^d).\]
We will use the notations $p_{i,j}=F_{i,j}(q_i)$ for $1\leq i\leq 2^d,1\leq j\leq n$. In particular, we have 
\[p_{i,1}=F_{i,1}(q_i)=q_i,\quad\forall 1\leq i\leq 2^d.\]
See Figure \ref{fig2} for an illustration of our notations.

\begin{remark}
Actually we only have $2^dn-2^d+1$ contractions in the i.f.s. $\{F_{i,j}:1\leq i\leq 2^d,1\leq j\leq n\}$, since $F_{i,n}=F_{i',n}$ for any $i,i'\in \{1,2,\cdots,2^d\}$. 
\end{remark}

\begin{figure}[htp]
	\includegraphics[width=4cm]{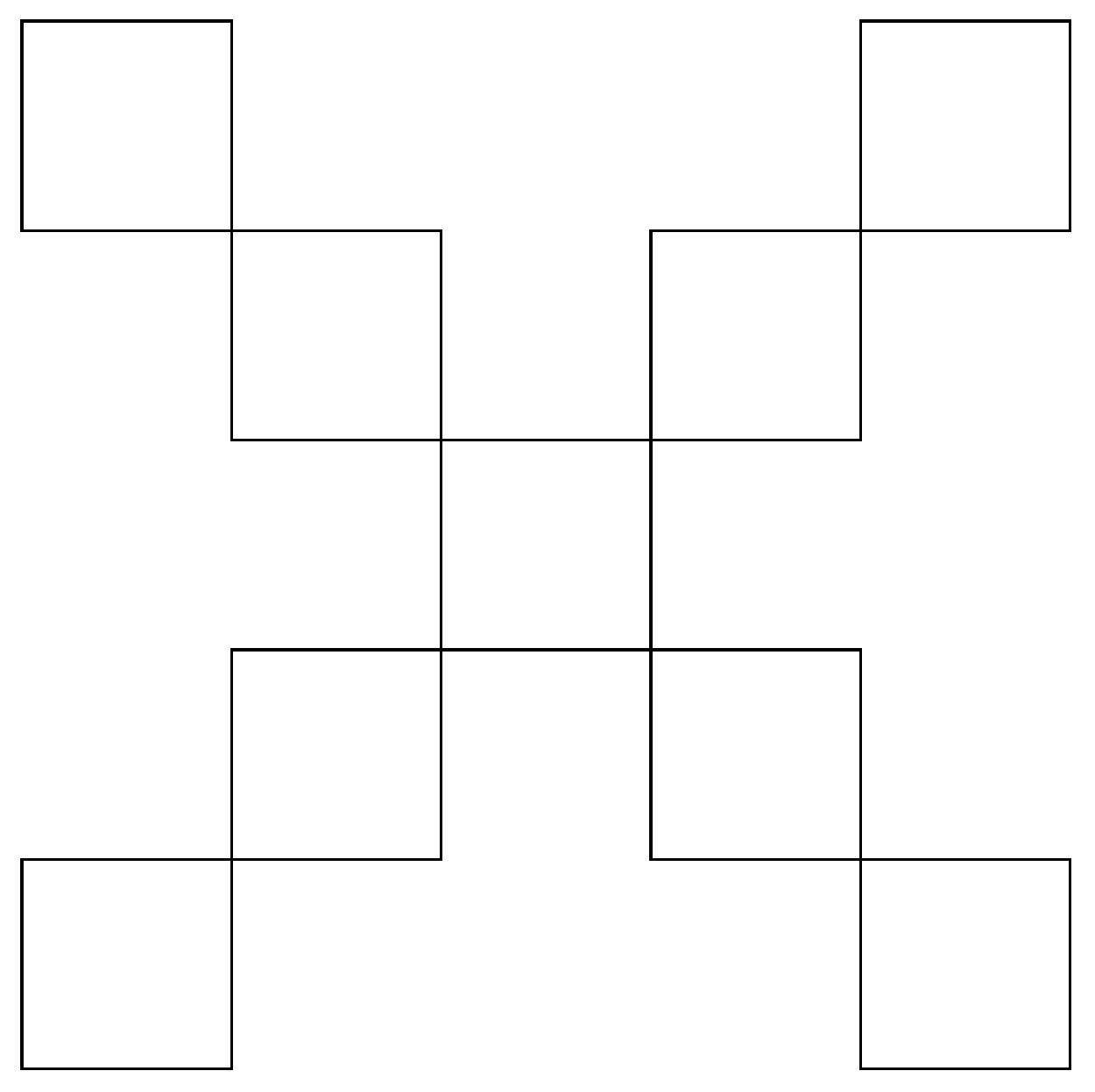}
	\begin{picture}(0,0)
		\put(-126,0){$p_1$}
		\put(-5,0){$p_2$}
		\put(-126,112){$p_3$}
		\put(-5,112){$p_4$}
		
		\put(-113,10){$F_{1,1}$}
		\put(-92,31){$F_{1,2}$}
		\put(-71,52){$F_{1,3}$}
	\end{picture}
	\caption{An illustration of $F_{i,j}$ and $q_i$.}\label{fig2}
\end{figure}

The set $V_0=\{q_i\}_{i=1}^{2^d}$ is treated as the boundary of the Vicsek set, and we let $E_0=\big\{\{q_i,q_{i'}\}:1\leq i<i'\leq 2^d\big\}$. Then $G_0=(V_0,E_0)$ is a complete graph. We define the level-$m$ approximating graph $G_m=(V_m,E_m)$ iteratively as follows:
\[V_m=\bigcup_{i=1}^{2^d}\bigcup_{j=1}^n F_{i,j}V_{m-1},\text{ for }m\geq 1,\]
and $E_m=\big\{\{F_{i,j}x,F_{i,j}y\}:\{x,y\}\in E_{m-1}, 1\leq i\leq 2^d, 1\leq j\leq n\big\}$. For $x,y\in V_m$ we will simply write $x\stackrel{m}\sim y$ if $\{x,y\}\in E_m$. When we use the notation $\sum_{x\stackrel{m}\sim y}$, we are taking the summation of a function defined on $E_m$, and each edge is counted once.

With the above notations, we can define the self-similar resistance forms and the Laplacians on Vicsek sets.

\begin{definition}\label{def22}
Consider $\mcV_n^d$, with $d\geq 2,n\geq 2$. Let $\mu=\mu_n^d$ be the normalized Hausdorff measure on $\mcV_n^d$. Let $m\geq 0$

(a). For any $f\in l(V_m)$, we define $\mcE_m(f)=(2n-1)^m\sum_{x\stackrel{m}\sim y} \big(f(x)-f(y)\big)^2$. 

(b). For each vertex $x\in V_m$ and $f\in l(V_m)$, we define 
\[\Delta_m f(x)=-N_x^{-1}\sum_{x\stackrel{m}\sim y}\big(f(y)-f(x)\big)=N_x^{-1}\sum_{x\stackrel{m}\sim y}f(y)-f(x),\]
where $N_x=\sum_{y:x\stackrel{m}\sim y}1$ is the number of neighbouring vertices of $x$ (degree of $x$).\vspace{0.1cm}

\noindent By taking the limit, we get the resistance forms as follows (see the books \cite{B,Ki3,S4}).

(c). For any $f\in C(\mcV_n^d)$, we define 
\[\mcE(f)=\lim\limits_{m\to\infty}\mcE_m(f),\]
and let $\mcF=\{f\in C(\mcV_n^d):\mcE(f)<\infty\}$. \vspace{0.1cm}

\noindent The form $(\mcE,\mcF)$ is then a local regular Dirichlet form on $L^2(\mcV_n^d,\mu)$. The Laplacian is defined with the following weak formula.

(d). We say $f\in dom(\Delta)$ if $f\in \mcF$ and there is $u\in L^2(\mcV_n^d,\mu)$ such that 
\[\mcE(f,g)=-<u,g>,\quad\forall g\in \mcF_0\]
where $<u,g>=\int_{\mcV_n^d} u(x)g(x)\mu(dx)$, $\mcE(f,g)=\frac{1}{2}\big(\mcE(f+g)-\mcE(f)-\mcE(g)\big)$ and $\mcF_0=\{v\in \mcF:v|_{V_0}=0\}$. We write $u=\Delta f$.
\end{definition}

There is well-known pointwise formula of the Laplacian $\Delta f$, when $\Delta f\in C(\mcV_n^d)$:
\[\Delta f(x)=2^d(2^d-1)\lim_{m\to\infty} \big((2n-1)(2^dn-2^d+1)\big)^m\Delta_m f(x),\qquad\forall x\in V_*,\]
where $V_*=\bigcup_{m=0}^\infty V_m$. The above limit is uniform on $V_*$. See books \cite{B,Ki3,S4} for details.\vspace{0.2cm}

Before the end of this section, we briefly introduce the eigenvalue problems we will study in the next section. 

Let $\lambda\in \mathbb{R}$ and $f\in \mcF$, we say $f$ is an eigenfunction (of the Laplacian) with eigenvalue $\lambda$ if equation (\ref{eqn21}) holds for any $g\in \mcF_0=\{u\in \mcF:u|_{V_0}=0\}$. 
\begin{equation}\label{eqn21}
\mcE(f,g)=\lambda<f,g>.
\end{equation}
In other words, $f$ is in the domain of the Laplacian, and 
\[-\Delta f=\lambda f.\] 
In addition, we say $\lambda$ is a \emph{Neumann} eigenvalue and $f$ is a \emph{Neumann} eigenfunction if and only if (\ref{eqn21}) holds for any $g\in \mcF$; we say $\lambda$ is a \emph{Dirichlet} eigenvalue and $f$ is a Dirchlet eigenfunction if and only if $f\in \mcF_0$.  

The eigenvalues and eigenfunctions corresponding to the graph Laplacian can be defined in a same manner. We simply say $f$ is an eigenfunction (of $\Delta_m$) with eigenvalue $\lambda_m$ if  
\begin{equation}\label{eqn22}
-\Delta_m f(x)=\lambda_mf(x),\quad \forall x\in V_m\setminus V_0.
\end{equation}
In addition, we say $\lambda_m$ is a \emph{Neumann} eigenvalue and $f$ is a \emph{Neumann} eigenfunction if and only if (\ref{eqn22}) holds for any $x\in V_0$; we say $\lambda$ is a \emph{Dirichlet} eigenvalue and $f$ is a Dirchlet eigenfunction if and only if $f|_{V_0}=0$.  

It is well known \cite{FS,tS} that all the Neumann and Dirichlet eigenfunctions are generated as normalized limits of Neumann and Dirichlet eigenfunctions of $\Delta_m$. So we will focus on the discrete eigenfunctions in this paper for simplicity.

\section{Spectral decimation}
The eigenfunctions and eigenvalues on Vicsek sets can be computed exactly with the celebrated spectral decimation recipe. In their acclaimed paper, Fukushima and Shima introduced the method to compute the Dirichlet and Neumann eigenfunctions \cite{FS}. The results extended to p.c.f. self-similar sets with strong regular harmonic structures \cite{tS}.

In particular, in \cite{Z1}, D. Zhou provided a full story about the eigenfunctions on planar Vicsek sets, and our result is a natural generalization. We aim to provide a version that is friendly to readers without any knowledge of strongly regular harmonic structures on fractals. The computations will be similar to that in \cite{S4} Chapter 3.

In this section, we will consider eigenfunctions of $\Delta_m,m\geq 0$. We show that there is a polynomial $R_{n,d}$ (depending on the fractal $V_n^{(d)}$) such that if $f$ is an eigenfunction on $V_{m+1},m\geq 0$, i.e.
\begin{equation}\label{eqn31}
	-\Delta_{m+1} f(x)=\lambda f(x),\quad\forall x\in V_{m+1}\setminus V_0,
\end{equation}
and $\lambda\neq \frac{2^d}{2^d-1}$, then 
\begin{equation}\label{eqn32}
	-\Delta_m f(x)=R_{n,d}(\lambda)f(x),\quad \forall x\in V_m\setminus V_0.
\end{equation}
In particular, if (\ref{eqn31}) holds for $x\in V_0$, so does (\ref{eqn32}), so we do not worry about Neumann eigenfunctions. The reverse direction also holds when $\lambda$ is not a forbidden eigenvalue (see explanation in subsection 3.2.): if we have (\ref{eqn32}) holds, then we can get an extension of $f$ so that (\ref{eqn31}) holds. 

Following \cite{Z1}, we use the Chebyshev polynomials to represent the results. In particular, $T_n(\cos\theta)=\cos(n\theta)$ and $U_n(\cos\theta)=\frac{\sin\big((n+1)\theta\big)}{\sin\theta}$ for $\theta\in \mathbb{R}$. 

\begin{definition}\label{def31}
	Let $T_n,U_n$ be the Chebyshev polynomials of the first kind and the second kind, i.e.,  
	\[
	\begin{cases}
		T_n(t)=2t T_{n-1}(t)-T_{n-2}(t),\\
		U_n(t)=2t U_{n-1}(t)-U_{n-2}(t),
	\end{cases}
	\]
	with $T_0(t)=1, T_1(t)=t, U_0(t)=1, U_1(t)=2t$.  
\end{definition}

\subsection{Restriction}
In this section, we consider an easy case: assume we know (\ref{eqn31}), we want to see (\ref{eqn32}).

\begin{theorem}\label{thm32}
	Let $t=-(2^d-1)\lambda+1$ for short. Define the polynomial \[R_{n,d}(\lambda)=1+(\lambda-1)\cdot U_{n-1}^2(t)+\big(t+\lambda+1\big)\cdot U_{n-1}(t)U_{n-2}(t)-t\cdot U_{n-2}^2(t).\]
	If $f$ is an eigenfunction (or Neumann eigenfunction) on $V_{m+1}$ with eigenvalue $\lambda$, then $f$ is an eigenfunction (or Neumann eigenfunction) on $V_m$ with eigenvalue $R_{n,d}(\lambda)$. 
\end{theorem}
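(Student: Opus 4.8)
\emph{Reduction to a single cell.}
First I would record the local combinatorics of $G_1$: along each branch $i$ the cells $F_{i,1},\dots,F_{i,n}$ form a chain meeting successively at the single vertices $p_{i,2},\dots,p_{i,n}$, the remaining $2^d-2$ vertices of each cell are private to it, and the $2^d$ tips $p_{i,n}$ are exactly the corners of the common central cell $F_{*,n}$. Two consequences matter. Distinct level-$m$ cells share only their corners, so eliminating $V_{m+1}\setminus V_m$ via (\ref{eqn31}) decouples into independent cell problems, each a scaled copy of the elimination of $V_1\setminus V_0$ in $G_1$. And every $x\in V_m\setminus V_0$ lies in exactly two cells while each $q_a\in V_0$ lies in one; refining to level $m+1$ preserves these counts, so the degree of $x$ in $G_{m+1}$ equals its degree in $G_m$. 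Hence it suffices to determine, for $G_1$ alone, the effective operator on $V_0$ obtained after eliminating $V_1\setminus V_0$; by the full $S_{2^d}$ symmetry it has the form $\Phi g(q_a)=\mu\,g(q_a)+\nu\sum_{b\neq a}g(q_b)$ for scalars $\mu=\mu(\lambda),\nu=\nu(\lambda)$.

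\emph{The Chebyshev recursion.}
Writing $x_j^{(i)}=f(p_{i,j})$ and keeping $t$ as in the statement, I would first treat a private (off-axis) vertex of a cell $F_{i,j}$: all such vertices share a common value, and their sum satisfies $(1+t)S_{i,j}=(2^d-2)\big(x_j^{(i)}+x_{j+1}^{(i)}\big)$, so $S_{i,j}=\beta\big(x_j^{(i)}+x_{j+1}^{(i)}\big)$ with $\beta=\frac{2^d-2}{1+t}$; the elimination is possible exactly because $\lambda\neq\frac{2^d}{2^d-1}$. Substituting this into the equation at an interior junction $p_{i,j}$ ($2\le j\le n-1$) and letting the constants combine collapses it to
\[
x_{j+1}^{(i)}=2t\,x_j^{(i)}-x_{j-1}^{(i)},\qquad 2\le j\le n-1,
\]
the Chebyshev recursion in $t$, whose solution is $x_j^{(i)}=U_{j-2}(t)\,x_2^{(i)}-U_{j-3}(t)\,x_1^{(i)}$. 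The equations at the tips $p_{i,n}$ take the common form $M x_n^{(i)}-(1+\beta)x_{n-1}^{(i)}=P$ with $P=\sum_k x_n^{(k)}$, and provide the inner boundary condition for each branch; the outflow at a corner is $\Phi g(q_a)=\big((2^d-1)-\beta\big)x_1^{(a)}-(1+\beta)x_2^{(a)}$.

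\emph{Diagonalizing and reading off $R$.}
Rather than invert the coupling directly, I would diagonalize $\Phi$ on the two $S_{2^d}$-symmetry types. On the sum-zero modes $P=\sum_k x_n^{(k)}$ vanishes (it is a multiple of $\sum_k x_1^{(k)}=0$), so each branch closes with inner condition $M x_n=(1+\beta)x_{n-1}$ and one reads off the eigenvalue $\mu-\nu$; on the constant mode all branches agree, $P=2^d x_n$, the inner condition becomes $(M-2^d)x_n=(1+\beta)x_{n-1}$, and one reads off $\mu+(2^d-1)\nu$. Each eigenvalue is a ratio of the branch quantities $U_{n-2}(t),U_{n-3}(t),U_{n-4}(t)$, and solving the $2\times2$ system recovers $\mu,\nu$. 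Finally, for fixed $x\in V_m\setminus V_0$ the sum of the cell outflows $\Phi$ over the two cells at $x$ equals $N_x\lambda f(x)$ by (\ref{eqn31}) together with the equality of degrees; rearranging gives $-\Delta_m f(x)=R_{n,d}(\lambda)f(x)$ with $R_{n,d}(\lambda)=\frac{\mu/(2^d-1)+\nu-\lambda}{\nu}$. At a corner $q_a\in V_0$ there is one cell and the Neumann condition supplies $\Phi g(q_a)=(2^d-1)\lambda f(q_a)$, yielding the same $R_{n,d}(\lambda)$ and so covering the Neumann case.

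\emph{Main obstacle.}
The conceptual steps are routine once the branch decoupling is set up; the real work is the closing algebra, namely showing that $\frac{\mu/(2^d-1)+\nu-\lambda}{\nu}$ simplifies, via identities such as $U_{n-1}=2tU_{n-2}-U_{n-3}$, to the stated form $1+(\lambda-1)U_{n-1}^2(t)+(t+\lambda+1)U_{n-1}(t)U_{n-2}(t)-t\,U_{n-2}^2(t)$. I expect the subtle point to be that all dependence on $d$ must collapse into the single variable $t$, so that the $2^d$-laden constants $\beta$ and $M$ cancel; carrying everything as polynomials in $t$, and invoking a polynomial-identity/continuity argument to absorb the degenerate locus $1+t=0$ (not excluded in the statement), is the cleanest way to keep this under control.
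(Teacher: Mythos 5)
Your proposal is correct in substance and reaches the theorem by a genuinely different packaging of the same local computation. The paper works directly with the values of the given eigenfunction: it eliminates the off-axis vertices (Lemma \ref{lemma33}), runs the Chebyshev recursion forward and backward along each branch with the ghost term $J_{i,n+1}$ (Lemmas \ref{lemma34}, \ref{lemma35}), couples the branches through the central-cell relation (Lemma \ref{lemma36}), and reads off $R_{n,d}$ at a single nonjunction corner, disposing of junction points only with the remark that ``the same idea works by taking the summation.'' You instead form the Schur complement of a single cell, invoke the $S_{2^d}$-symmetry to write the effective boundary operator as $\Phi=\mu\,\mathrm{Id}+\nu\,(\text{sum over the other corners})$, and extract $\mu-\nu$ and $\mu+(2^d-1)\nu$ from the two symmetry modes. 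I checked your local facts (decoupling of cells, preservation of vertex degrees under refinement, $(1+t)S_{i,j}=(2^d-2)(x_j^{(i)}+x_{j+1}^{(i)})$, the tip equation, and the assembly identity $R_{n,d}(\lambda)=\frac{\mu/(2^d-1)+\nu-\lambda}{\nu}$); they are right, and your assembly treats junction, boundary and Neumann vertices uniformly, which is cleaner than the paper's summation remark. A pleasant by-product of your route: the degeneration of your two closure conditions occurs exactly at the roots of $A_{n,d}$ and $S_{n,d}$, so it exposes the forbidden eigenvalues of Theorem \ref{thm37} and anticipates the factorizations in Lemma \ref{lemma41}.

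One step, as written, has a hole. The operator $\Phi$ exists only when the interior values of a cell are uniquely determined by the boundary values, and this fails precisely at the forbidden eigenvalues (roots of $S_{n,d}$ and $A_{n,d}$), which the theorem's hypotheses do not exclude and at which level-$(m+1)$ eigenfunctions genuinely exist (see Theorem \ref{thm43}); your ``continuity in $\lambda$'' escape does not apply, because the eigenfunction is fixed at one value of $\lambda$. The fix is to do what the paper implicitly does: never invert the boundary-to-interior map, but run your mean/deviation decomposition on the given branch values $(x_j^{(i)})_i$ themselves. The tip relations then yield the unconditional affine identity $\sum_{i\neq a}x_1^{(i)}=\alpha(\lambda)\,x_1^{(a)}+\gamma(\lambda)\,\Sigma_a$, where $\Sigma_a$ is the within-cell neighbor sum at the corner $q_a$; this is valid whenever $\lambda\neq\frac{2^d}{2^d-1}$ and $1+t\neq 0$ (the change of variable from $x_2^{(a)}$ to $\Sigma_a$ has coefficient $\frac{t+N}{1+t}$, nonzero exactly when $\lambda\neq\frac{2^d}{2^d-1}$), and summing it over the one or two cells at $x$ together with (\ref{eqn31}) gives (\ref{eqn32}) at every $\lambda$, forbidden or not. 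Two further remarks: your flagging of the locus $1+t=0$ (i.e.\ $\lambda=\frac{2}{2^d-1}$) is legitimate --- the paper's own Lemma \ref{lemma33} divides by $N\lambda-2$ there without comment --- but it requires a direct argument at that value, not a limiting one; and you defer the closing algebra identifying $\frac{\mu/(2^d-1)+\nu-\lambda}{\nu}$ with the stated polynomial in $U_{n-1}(t),U_{n-2}(t)$, which is the bulk of the paper's actual proof and would still have to be carried out.
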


For short, we write $F_w=F_{w_1}F_{w_2}\cdots F_{w_m}$ and $K_w=F_w\mcV_n^{(d)}$, where $w=w_1w_2\cdots w_m$ with $w_i\in \{(i,j):1\leq i\leq 2^d,1\leq j\leq n\}$. The set $K_w$ is called an $m$-cell. We will focus on an $m$-cell, $K_w$. 

Let $f$ be an eigenfunction of $\Delta_{m+1}$ on $V_{m+1}=\bigcup_{w'\in W_m} F_{w'}V_1$, with the eigenvalue being $\lambda$. We write 
\[
\begin{cases}
	J_{i,j}=f(F_wp_{i,j}),&\text{ for }1\leq i\leq 2^d,1\leq j\leq n,\\
	L_{i,j,k}=f(F_wF_{i,j}q_k),&\text{ for }1\leq i\leq 2^d,1\leq j\leq n-1, k\in \{1,2,\cdots,2^d\}\setminus \{i,2^d-i+1\}.
\end{cases}
\]
For short, we write $N=2^d-1$, so the number of neighbours of a vertex is either $N$ or $2N$. In addition, we use the same notation
\[t=1-N\lambda=-(2^d-1)\lambda+1\]
as in the statement of Theorem \ref{thm32}.

Then, the eigenvalue equation (\ref{eqn31}) on $F_w(V_1\setminus V_1)$ can now be rewritten as follows,
\begin{eqnarray}
\label{eqn33}	
\lambda L_{i,j,k}=L_{i,j,k}-\frac{1}{N}\big(J_{i,j}+J_{i,j+1}+\sum_{k'\neq i,2^d-i+1,k}L_{i,j,k'}\big),\\
\label{eqn34}
\lambda J_{i,j}=J_{i,j}-\frac{1}{2N}\big(J_{i,j-1}+J_{i,j+1}+\sum_{k\neq i,2^d-i+1}(L_{i,j,k}+L_{i,j-1,k})\big),\\
\label{eqn35}	
\lambda J_{i,n}=J_{i,n}-\frac{1}{2N}\big(J_{i,n-1}+\sum_{k\neq i,2^d-i+1}L_{i,n-1,k}+\sum_{i'\neq i}J_{i',n}\big),
\end{eqnarray}
where in (\ref{eqn33}), $1\leq i\leq 2^d,1\leq j\leq n-1, k\in \{1,2,\cdots,2^d\}\setminus \{i,2^d-i\}$; in (\ref{eqn34}), $1\leq i\leq 2^d,1\leq j\leq n-1$; in (\ref{eqn35}), $1\leq i\leq 2^d$.

For convenience, in this subsection, we assume $F_wp_1$ is a nonjunction point, then
\begin{equation}\label{eqn36}
	\lambda J_{1,1}=J_{1,1}-\frac{1}{N}\big(J_{1,2}+\sum_{k\neq 1,2^d}L_{1,1,k}\big)
\end{equation}

We will use the equations (\ref{eqn33}),(\ref{eqn34}) and (\ref{eqn35}) to find a relation between $\sum_{i\neq 1}J_{i,1}$ and $J_{1,2}+\sum_{k\neq i,2^d-i+1}L_{1,1,k}$. This will provide all the information we need to compare $\Delta_{m+1}f(F_wq_1)$ and $\Delta_m f(F_wq_1)$. We list the computation steps as lemmas for convenience of readers. 

\begin{lemma}\label{lemma33}
 If $\lambda\neq \frac{2^d}{2^d-1}$, then $L_{i,j,k}=\frac{1}{1+t}(J_{i,j}+J_{i,j+1})$, for any  $1\leq i\leq 2^d,1\leq j\leq n-1, k\in \{1,2,\cdots,2^d\}\setminus \{i,2^d-i+1\}$. 	
\end{lemma}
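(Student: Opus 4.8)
The plan is to work entirely inside the fixed $m$-cell $K_w$ and to extract the relation directly from the eigenvalue equation (\ref{eqn33}) at the free corners $F_wF_{i,j}q_k$, without ever invoking (\ref{eqn34}) or (\ref{eqn35}). Fix $i$ and $j$ with $1\le i\le 2^d$ and $1\le j\le n-1$, and recall that the admissible indices are $k\in\{1,\dots,2^d\}\setminus\{i,2^d-i+1\}$, of which there are exactly $N-1=2^d-2$; for each such $k$ the vertex $F_wF_{i,j}q_k$ has precisely $N$ neighbours inside the cell, namely $F_wp_{i,j}$, $F_wp_{i,j+1}$ and the $N-2$ remaining free corners. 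First I would rewrite (\ref{eqn33}) by moving the diagonal term to the left and abbreviating $S=\sum_{k'}L_{i,j,k'}$, the sum running over all $N-1$ admissible indices, so that the equation becomes $\big(N(1-\lambda)+1\big)L_{i,j,k}=J_{i,j}+J_{i,j+1}+S$.

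The key step is to show that $L_{i,j,k}$ is independent of $k$. For this I would subtract equation (\ref{eqn33}) written for two admissible indices $k_1$ and $k_2$: since $J_{i,j}$, $J_{i,j+1}$ and $S$ are common to both, everything cancels except the diagonal contribution, leaving $\big(\lambda-1-\tfrac1N\big)(L_{i,j,k_1}-L_{i,j,k_2})=0$. The scalar factor vanishes exactly when $\lambda=1+\tfrac1N=\tfrac{2^d}{2^d-1}$, which is precisely the value excluded by hypothesis; hence $L_{i,j,k_1}=L_{i,j,k_2}$ and all the free-corner values coincide in a common value $L$. This is where the assumption $\lambda\neq\tfrac{2^d}{2^d-1}$ does its work, and recognising that the excluded eigenvalue is exactly the symmetric degeneracy of this local system is the main point of the argument; it is the step I expect to be the real crux, the remaining computation being routine once $k$-independence is secured.

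Finally I would substitute the common value back. With $S=(N-1)L$ the displayed identity reads $\big(N(1-\lambda)+1\big)L=J_{i,j}+J_{i,j+1}+(N-1)L$, and using $t=1-N\lambda$ the coefficient on the left simplifies to $N+t$, so that $(N+t)L-(N-1)L=(t+1)L=J_{i,j}+J_{i,j+1}$. Dividing by $1+t$ gives $L_{i,j,k}=\tfrac{1}{1+t}(J_{i,j}+J_{i,j+1})$ for every admissible $k$, which is the assertion. The only care needed beyond this is the bookkeeping of the index sets, keeping straight that there are $N-1$ free corners while the summation in (\ref{eqn33}) has $N-2$ terms, together with the algebraic substitution $t=1-N\lambda$.
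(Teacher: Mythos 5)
Your proof is correct, and it is the dual of the paper's elimination rather than a copy of it. The paper \emph{symmetrizes}: it sums (\ref{eqn33}) over all $N-1$ admissible indices to solve first for $S=\sum_{k'}L_{i,j,k'}$, obtaining $S=-\frac{N-1}{N\lambda-2}(J_{i,j}+J_{i,j+1})$ (this is where the division by $N\lambda-2$, i.e.\ by $-(1+t)$, happens), and only then re-inserts $S$ into (\ref{eqn33}) for a fixed $k$, using the hypothesis $\lambda\neq\frac{2^d}{2^d-1}$ at that second step to solve for the individual $L_{i,j,k}$. You \emph{antisymmetrize} instead: the difference of two instances of (\ref{eqn33}) cancels everything but the diagonal term, the hypothesis is consumed immediately to force $k$-independence, and the division by $1+t$ is deferred to the last line. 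Both are elementary eliminations on the same small linear system, but your ordering makes the role of the excluded eigenvalue transparent: $\frac{2^d}{2^d-1}$ is precisely the value at which the differences $L_{i,j,k_1}-L_{i,j,k_2}$ are unconstrained, which is the ``extra freedom'' the paper only remarks on in passing inside its proof; this is also the cleaner way to see why $\frac{2^d}{2^d-1}$ reappears as a forbidden eigenvalue in Section 3.2. One shared caveat: your final division, like the paper's first one, silently assumes $1+t\neq 0$, i.e.\ $\lambda\neq\frac{2}{2^d-1}$. Since the asserted formula $\frac{1}{1+t}(J_{i,j}+J_{i,j+1})$ is meaningless at $t=-1$ anyway, this is an imprecision inherited from the lemma's statement rather than a defect of your argument; indeed at $t=-1$ your derivation still returns the correct degenerate information $(t+1)L=J_{i,j}+J_{i,j+1}$, forcing $J_{i,j}+J_{i,j+1}=0$.
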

\begin{proof}
	Fix $1\leq i\leq 2^d,1\leq j\leq n-1$. By summing equations (\ref{eqn33}) over $k'\in \{1,2,\cdots,2^d\}\setminus \{i,2^d-i+1\}$, we get 
	\[(\lambda-1+\frac{N-2}{N})(\sum_{k\neq i,2^d-i+1}L_{i,j,k})=-\frac{N-1}{N}(J_{i,j}+J_{i,j+1}),\]
	so 
	\[\sum_{k\neq i,2^d-i+1}L_{i,j,k}=-\frac{N-1}{N\lambda-2}(J_{i,j}+J_{i,j+1})\]
	For a fixed $k\neq i,2^d-i+1$, we insert the above relation into equation (\ref{eqn33}) again. If $\lambda=\frac{2^d}{2^d-1}$, (\ref{eqn33}) always holds, so we have extra freedom for the value of $L_{i,j,k}$; if $\lambda\neq \frac{2^d}{2^d-1}$, we can see that $L_{i,j,k}=-\frac{1}{N\lambda-2}(J_{i,j}+J_{i,j+1})$.
\end{proof}

Next, we plug the formula of $L_{i,j,k}$ in Lemma \ref{lemma33} into (\ref{eqn33}) and (\ref{eqn36}). 

\begin{lemma}\label{lemma34}
	For $1\leq i\leq 2^d,2\leq j\leq n$, we have 
	\begin{equation}\label{eqn37}
		J_{i,j}=2tJ_{i,j-1}-J_{i,j-2}.
	\end{equation}
    In addition, if $F_wq_1$ is nonjunction, we have 
    \begin{equation}\label{eqn38}
    	 J_{1,2}=tJ_{1,1}. 	
    \end{equation}
\end{lemma}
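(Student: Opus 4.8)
The plan is to eliminate the $L$-values with Lemma~\ref{lemma33} and read the eigenvalue equations along a single branch as a three-term recursion. Fix $i$ and an interior branch point $F_wp_{i,j}$ with $2\leq j\leq n-1$, governed by (\ref{eqn34}). Since there are exactly $N-1=2^d-2$ admissible indices $k$, Lemma~\ref{lemma33} gives
\[\sum_{k\neq i,2^d-i+1}L_{i,j,k}=\frac{N-1}{1+t}(J_{i,j}+J_{i,j+1}),\qquad \sum_{k\neq i,2^d-i+1}L_{i,j-1,k}=\frac{N-1}{1+t}(J_{i,j-1}+J_{i,j}).\]
I would substitute these two sums into (\ref{eqn34}), so that the eigenvalue equation at $F_wp_{i,j}$ becomes a relation among the branch values $J_{i,j-1},J_{i,j},J_{i,j+1}$ alone.

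The next step is the algebraic simplification. Writing $t=1-N\lambda$, so that $N(\lambda-1)=1-t-N$, I would multiply through by $-2N(1+t)$ to clear denominators and then collect terms. The coefficient of $J_{i,j-1}+J_{i,j+1}$ reduces to $t+N$, while the coefficient of $J_{i,j}$ reduces to $2t(t+N)$; cancelling the common factor $t+N$ leaves $J_{i,j-1}+J_{i,j+1}=2tJ_{i,j}$, which is the recursion (\ref{eqn37}) for the indices $3\leq j\leq n$ (the instance $j=n$ coming from the point $F_wp_{i,n-1}$). The cancellation is legitimate exactly because $t+N=N+1-N\lambda$ vanishes if and only if $\lambda=\frac{2^d}{2^d-1}$, the same forbidden value already assumed through Lemma~\ref{lemma33}. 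For a branch whose base $F_wq_i$ is a junction, the identical computation applied to (\ref{eqn34}) at $j=1$, with the neighbouring cell supplying the value $J_{i,0}$, extends (\ref{eqn37}) down to $j=2$.

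For (\ref{eqn38}) I would run the same substitution starting instead from the nonjunction equation (\ref{eqn36}) at $F_wq_1$. Now the vertex has degree $N$ rather than $2N$, so only the single sum $\sum_{k\neq 1,2^d}L_{1,1,k}=\frac{N-1}{1+t}(J_{1,1}+J_{1,2})$ enters and the outer prefactor is $\frac1N$. Multiplying through by $-N(1+t)$ and simplifying as before, the factor $t+N$ again appears in the coefficients of both $J_{1,1}$ and $J_{1,2}$ and cancels, leaving $J_{1,2}=tJ_{1,1}$. This two-term relation is the seed at the nonjunction end of the branch that replaces the missing $j=2$ instance of the recursion.

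I expect the only genuine work to be the coefficient bookkeeping in the second step: confirming that after the multiplication the diagonal terms combine to $2t(t+N)$ and the off-diagonal terms to $t+N$, so that one and the same factor $t+N$ controls both equations. Everything else is direct substitution. The structural point, which needs no computation, is the degree dichotomy: interior branch points have degree $2N$ and yield the genuine three-term recursion, whereas the nonjunction boundary point has degree $N$ and yields the two-term seed relation.
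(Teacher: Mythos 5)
Your proof is correct and is exactly the paper's intended argument: the paper disposes of Lemma~\ref{lemma34} with the single remark that one plugs the formula of Lemma~\ref{lemma33} into the pointwise eigenvalue equations (its citation of (\ref{eqn33}) there is evidently a typo for (\ref{eqn34})), and your substitution, the coefficient bookkeeping yielding the common factor $t+N$, and the observation that $t+N=0$ precisely when $\lambda=\frac{2^d}{2^d-1}$ (the value already excluded in Lemma~\ref{lemma33}) all check out. Your treatment of the endpoints --- extending (\ref{eqn37}) to $j=2$ at a junction via the neighbouring cell's value $J_{i,0}$, and deriving the seed relation (\ref{eqn38}) from the degree-$N$ equation (\ref{eqn36}) at a nonjunction point --- is in fact more careful than the paper's own statement.
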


We can apply (\ref{eqn37}) to find a recursive formula of $J_{i,n}$ in terms of $J_{i,1}$ and $J_{i,2}$. Moreover, we can also do the other direction, finding the formula of $J_{i,1}$ in terms of $J_{i,n}$ and $J_{i,n+1}$, where $J_{i,n+1}$ is a make up term to simplify the computations.

\begin{lemma}\label{lemma35}
For $j\geq 0$, we define $P_j(x)=2T_j(x)-U_j(x)$; for $j\geq 1$, we define $Q_j(x)=U_{j-1}(x)$. In addition, we introduce a ghost term
\[J_{i,n+1}=2tJ_{i,n}-J_{i,n-1}.\]
	
(a). For $1\leq i\leq 2^d$ and $1\leq j\leq n+1$, we have 
\begin{equation}\label{eqn39}
J_{i,j}=P_{j-1}J_{i,1}+Q_{j-1}J_{i,1}.
\end{equation}

(b). For $1\leq i\leq 2^d$ and $1\leq j\leq n+1$, we have
\begin{equation}\label{eqn310}
	J_{i,j}=P_{n+1-j}J_{i,n+1}+Q_{n+1-j}J_{i,n}.
\end{equation}
\end{lemma}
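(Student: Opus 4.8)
The plan is to recognize both identities as two instances of a single fact: the sequence $(J_{i,j})_j$ solves a second-order linear recurrence, and any such solution is pinned down by two consecutive initial data through a fixed pair of basis solutions built from Chebyshev polynomials. First I would record that, by Lemma~\ref{lemma34} together with the definition of the ghost term, we have
\begin{equation*}
J_{i,j}=2tJ_{i,j-1}-J_{i,j-2}\qquad\text{for all } 2\leq j\leq n+1,
\end{equation*}
so the recurrence is now valid on the full range up to $n+1$. The key observation is that $T_j$, $U_j$, and hence $P_j=2T_j-U_j$ as well as $Q_{j+1}=U_j$, all satisfy the three-term recurrence $X_j=2tX_{j-1}-X_{j-2}$, since by Definition~\ref{def31} this recurrence is linear and invariant under index shifts.

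For part (a) I would prove by induction on $j$ that $J_{i,j}=P_{j-1}J_{i,1}+Q_{j-1}J_{i,2}$ (here the second initial datum is $J_{i,2}$). The base cases are the identities $P_0=1,\ Q_0=U_{-1}=0$ (so the right-hand side at $j=1$ reduces to $J_{i,1}$) and $P_1=0,\ Q_1=U_0=1$ (so at $j=2$ it reduces to $J_{i,2}$); both follow from the definitions of $T,U,P,Q$ together with the convention $U_{-1}=0$. Since the left-hand side $J_{i,j}$ and the right-hand side obey the same recurrence and agree at $j=1,2$, they agree for every $1\leq j\leq n+1$.

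Part (b) is the same statement run in the reverse direction. I would substitute $\tilde J_k:=J_{i,n+1-k}$, under which the recurrence becomes $\tilde J_{k+2}=2t\tilde J_{k+1}-\tilde J_k$, i.e. again the Chebyshev recurrence, now started from $\tilde J_0=J_{i,n+1}$ and $\tilde J_1=J_{i,n}$. The same induction as in (a), using $P_0=1,\ Q_0=0$ and $P_1=0,\ Q_1=1$, yields $\tilde J_k=P_kJ_{i,n+1}+Q_kJ_{i,n}$, which is exactly (\ref{eqn310}) after setting $k=n+1-j$.

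There is no genuine obstacle here; the content is the standard fact that a solution of a second-order constant-coefficient recurrence is the linear combination of two basis solutions matching the prescribed initial conditions. The only points requiring care are bookkeeping: adopting the convention $U_{-1}=0$ so that $Q_0=0$ and the formulas hold at the endpoints $j=1$ (in (a)) and $j=n+1$ (in (b)), and using the ghost term $J_{i,n+1}$ precisely so that the recurrence, and hence the induction, extends to the top index.
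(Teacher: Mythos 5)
Your proof is correct and is essentially the paper's own argument: the paper states Lemma \ref{lemma35} without a detailed proof, intending exactly this observation that $J_{i,j}$, $T_j(t)$, $U_j(t)$ (hence $P_j$ and $Q_j$) all satisfy the same three-term recurrence $X_j=2tX_{j-1}-X_{j-2}$, so matching the two initial data at $j=1,2$ (forward) and at $j=n+1,n$ (backward, using the ghost term) gives (\ref{eqn39}) and (\ref{eqn310}) by induction. You also correctly repaired the typo in (\ref{eqn39}), whose second term should read $Q_{j-1}J_{i,2}$ rather than $Q_{j-1}J_{i,1}$, as confirmed by the paper's subsequent derivation of (\ref{eqn311}) from (\ref{eqn38}) and (\ref{eqn39}).
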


In particular, if $F_wq_1$ is a nonjunction vertex as in our setting, by (\ref{eqn38}) and (\ref{eqn39}), we have 
\begin{equation}\label{eqn311}
	J_{1,j}=T_{j-1}(t)\cdot J_{1,1},\text{ for }1\leq j\leq n. 
\end{equation}

\begin{lemma}\label{lemma36}
We have the equation
	\begin{equation}\label{eqn312}
		\sum_{i'\neq i} J_{i',n}=\frac{t+N}{t+1}J_{i,n+1}+\frac{N-1}{t+1}J_{i,n}.
	\end{equation}
\end{lemma}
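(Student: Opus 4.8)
The plan is to extract the relation purely from the single junction equation (\ref{eqn35}) at the center vertex $F_wp_{i,n}$, feeding in the values of the $L$'s from Lemma \ref{lemma33} and the ghost term from Lemma \ref{lemma35}. First I would rearrange (\ref{eqn35}) to isolate the quantity of interest: moving the $J_{i,n}$ term to the left gives $(1-\lambda)J_{i,n}=\frac{1}{2N}\big(J_{i,n-1}+\sum_{k\neq i,2^d-i+1}L_{i,n-1,k}+\sum_{i'\neq i}J_{i',n}\big)$, so that
\[\sum_{i'\neq i}J_{i',n}=2N(1-\lambda)J_{i,n}-J_{i,n-1}-\sum_{k\neq i,2^d-i+1}L_{i,n-1,k}.\]

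Next I would convert everything to the variable $t$. Since $t=1-N\lambda$ we have $N(1-\lambda)=t+N-1$, which turns the first term into $2(t+N-1)J_{i,n}$. For the sum over $k$, Lemma \ref{lemma33} gives $L_{i,n-1,k}=\frac{1}{1+t}(J_{i,n-1}+J_{i,n})$ independently of $k$, and there are exactly $N-1=2^d-2$ admissible indices $k\in\{1,\dots,2^d\}\setminus\{i,2^d-i+1\}$, so $\sum_{k}L_{i,n-1,k}=\frac{N-1}{1+t}(J_{i,n-1}+J_{i,n})$. At this stage the right-hand side is a linear combination of $J_{i,n}$ and $J_{i,n-1}$ only.

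The final step is to trade $J_{i,n-1}$ for $J_{i,n+1}$ using the ghost-term definition $J_{i,n+1}=2tJ_{i,n}-J_{i,n-1}$ from Lemma \ref{lemma35}, i.e. $J_{i,n-1}=2tJ_{i,n}-J_{i,n+1}$. Substituting and collecting terms, the coefficient of $J_{i,n+1}$ becomes $1+\frac{N-1}{1+t}=\frac{t+N}{1+t}$, and the coefficient of $J_{i,n}$ reduces, after the single simplification $2-\frac{2t+1}{1+t}=\frac{1}{1+t}$, to $(N-1)\cdot\frac{1}{1+t}=\frac{N-1}{1+t}$, which is exactly (\ref{eqn312}). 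There is no conceptual obstacle here; the whole computation is a short piece of algebra, and the only thing to watch is the bookkeeping — correctly counting the $N-1$ terms $L_{i,n-1,k}$ and keeping the $t$-substitution straight so that the clean cancellation $2(1+t)-(2t+1)=1$ appears.
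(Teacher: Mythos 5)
Your proposal is correct and follows essentially the same route as the paper: both insert the formula for $L_{i,n-1,k}$ from Lemma \ref{lemma33} into the junction equation (\ref{eqn35}) and then rewrite the resulting combination of $J_{i,n}$ and $J_{i,n-1}$ via the ghost term $J_{i,n+1}=2tJ_{i,n}-J_{i,n-1}$. The only difference is presentational — you carry out the algebra in the variable $t$ with the ghost-term substitution made explicit, whereas the paper simplifies in $\lambda$ and leaves the final equivalence with (\ref{eqn312}) to the reader — and your coefficient computations check out.
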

\begin{proof}
	Fix $i$, and insert the formula $L_{i,n-1,k}=-\frac{1}{N\lambda-2}(J_{i,n-1}+J_{i,n})$ into (\ref{eqn35}), we get 
	\[
	\frac{1}{2N}\sum_{i'\neq i}J_{i',n}=(1-\lambda)J_{i,n}-\frac{1}{2N}J_{i,n-1}+\frac{N-1}{2N(N\lambda-2)}(J_{i,n}+J_{i,n-1}).
	\]
	By simplifying the above equation, we get
	\[
	\sum_{i'\neq i} J_{i',n}=-\frac{-2N^2\lambda^2+(2N^2+4N)\lambda-3N-1}{N\lambda-2}J_{i,n}+\frac{N+1-N\lambda}{N\lambda-2}J_{i,n-1}.
	\]
	Finally, one can see it is equivalent to (\ref{eqn312}).
\end{proof}

\noindent\textbf{Remark.} The intuition behind (\ref{eqn312}) is that if we let $L_{i,n,k}=\frac{1}{t+1}(J_{i,n}+J_{i,n+1})$, then we have $\sum\limits_{k\neq i,2^d-i+1}L_{i,n,k}+J_{i,n}=\frac{t+N}{t+1}J_{i,n+1}+\frac{N-1}{t+1}J_{i,n}$. \vspace{0.15cm}

In particular, as a consequence of Lemma \ref{lemma36}, we have 
\begin{equation}\label{eqn313}
\sum_{i\neq 1} J_{i,n}=\frac{t+N}{t+1}J_{1,n+1}+\frac{N-1}{t+1}J_{1,n}.
\end{equation}
In addition, by summing (\ref{eqn312}) over $i\neq 1$ cases, we get
\[NJ_{1,n}+(N-1)\sum_{i\neq 1}J_{i,n}=\sum_{i'\neq i}\sum_{i\neq 1}J_{i',n}=\frac{t+N}{t+1}\sum_{i\neq 1}J_{i,n+1}+\frac{N-1}{t+1}\sum_{i\neq 1}J_{i,n},\]
which is simplified to be
\[\sum_{i\neq 1}J_{i,n+1}=\frac{N(t+1)}{t+N}J_{1,n}+\frac{(N-1)t}{t+N}\sum_{i\neq 1}J_{i,n}.\]
Insert equation (\ref{eqn313}) into the above equation, we get
\begin{equation}\label{eqn314}
\sum_{i\neq 1}J_{i,n+1}=\frac{Nt-t}{t+1}J_{1,n+1}+\frac{Nt+1}{t+1}J_{1,n}.
\end{equation}

Finally, by Lemma \ref{lemma35} (b) and equations (\ref{eqn313}) (\ref{eqn314}), we have 
\[
\begin{aligned}
\sum_{i\neq 1} J_{i,1}=&P_n(t)\big(\frac{Nt-t}{t+1}T_n(t)+\frac{Nt+1}{t+1}T_{n-1}(t)\big)\cdot J_{1,1}
+Q_n(t)\big(\frac{N+t}{t+1}T_n(t)+\frac{N-1}{t+1}T_{n-1}(t)\big)\cdot J_{1,1}\\
=&-U_{n-2}(t)\big(\frac{Nt-t}{t+1}T_n(t)+\frac{Nt+1}{t+1}T_{n-1}(t)\big)\cdot J_{1,1}\\&+U_{n-1}(t)\big(\frac{N+t}{t+1}T_n(t)+\frac{N-1}{t+1}T_{n-1}(t)\big)\cdot J_{1,1}\\=&-U_{n-2}(t)\big(\frac{Nt-t}{t+1}(tU_{n-1}(t)-U_{n-2}(t))+\frac{Nt+1}{t+1}(U_{n-1}(t)-tU_{n-2}(t))\big)\cdot J_{1,1}\\&+U_{n-1}(t)\big(\frac{N+t}{t+1}(tU_{n-1}(t)-U_{n-2}(t))+\frac{N-1}{t+1}(U_{n-1}(t)-tU_{n-2}(t))\big)\cdot J_{1,1}\\
\end{aligned}
\]
The eigenvalue equation on $V_{m-1}$ holds at $F_wq1$ with eigenvalue being $\lambda_m$ if 
$$\sum_{i\neq 1} J_{i,1}=N(1-\lambda_m)\cdot J_{1,1}.$$
So
\[
\begin{aligned}
N(1-\lambda_m)=&-U_{n-2}(t)\big(\frac{Nt-t}{t+1}(tU_{n-1}(t)-U_{n-2}(t))+\frac{Nt+1}{t+1}(U_{n-1}(t)-tU_{n-2}(t))\big)\\
&+U_{n-1}(t)\big(\frac{N+t}{t+1}(tU_{n-1}(t)-U_{n-2}(t))+\frac{N-1}{t+1}(U_{n-1}(t)-tU_{n-2}(t))\big)\\
\end{aligned}
\]
Which simplifies to:
\[
\begin{aligned}
	\lambda_m=1+(\lambda-1)\cdot U_{n-1}^2(t)+(t+\lambda+1)\cdot U_{n-1}(t)U_{n-2}(t)-t\cdot U_{n-2}^2(t)
\end{aligned}
\]
Where $t=1-N\lambda$ and $N=2^d-1$. 

The above computation applies to any nonjunction point in $V_{m-1}$. For junction points, the same idea works by taking the summation. This finishes the proof of Theorem \ref{thm32}. 

\subsection{Forbidden eigenvalues}
In this part, we need to consider the reverse direction. To extend an eigenfunction on $V_{m}$ to an eigenfunction on $V_{m+1}$. Still, it suffices to study a $m$ cell, so we take the same notations as the previous part. 

The question is now, if we have $J_{i,1},1\leq i\leq 2^d$ given at first, can we always find suitable values for all $J_{i,j}$ and $L_{i,j,k}$. Since (\ref{eqn31}), or equivalently the equations (\ref{eqn33},\ref{eqn34},\ref{eqn35}), provide us a linear system with $\#(V_1\setminus V_0)$ equations (depending on $\lambda$) and exactly $\#(V_1\setminus V_0)$ variables to solve, we only need to see when the system is degenerate. 

In other words, we will find all the $\lambda$ such that (\ref{eqn33},\ref{eqn34},\ref{eqn35}) have non-trivial solutions, if we are given the trivial boundary values $J_{i,1}=0,\forall 1\leq i\leq 2^d$. Such eigenvalues are called forbidden eigenvalues. In particular, we have observed $\frac{2^d}{2^d-1}$ is one of the forbidden eigenvalues. 

To find the rest, we take the advantage of the symmetry, noticing that any function can be decomposed into the symmetric part and the antisymmetric part. \vspace{0.15cm}

\noindent\textbf{The symmetric case.} \textit{Assume $J_{i,1}=0$ and $J_{i,2}=1$ for all $1\leq i\leq 2^d$.}

For such a solution to exist, we only need to check (\ref{eqn312}). By symmetry, $J_{1,n}=J_{i,n},1\leq i\leq 2^d$, so we have
\[NJ_{1,n}=\frac{t+N}{t+1}J_{1,n+1}+\frac{N-1}{t+1}J_{1,n}.\]
In addition, by Lemma \ref{lemma35} (a), we have $J_{1,n+1}=U_{n-1}(t)$ and $J_{1,n}=U_{n-2}(t)$, so
\[(t+N)U_{n-1}(t)-(tN+1)U_{n-2}(t)=0.\]
Noticing that $tU_{n-1}(t)-U_{n-2}(t)=T_n(t)$ and $U_{n-1}(t)-tU_{n-2}(t)=T_{n-1}(t)$, we can simplify the equation to be 
\[T_n(t)+NT_{n-1}(t)=0.\]
\vspace{0.15cm}

\noindent\textbf{The antisymmetric case.} \textit{Assume $J_{i,1}=0$ for all $1\leq i\leq 2^d$. In addition, $J_{1,1}=1,J_{2,1}=-1$ and $J_{i,1}=0$ for all $3\leq i\leq 2^d$.}

Similar to the symmetric case, by (\ref{eqn312}), we have 
\[-J_{1,n}=\frac{t+N}{t+1}J_{1,n+1}+\frac{N-1}{t+1}J_{1,n}.\]
Thus, such a solution exists if and only if 
\[U_{n-1}(t)+U_{n-2}(t)=0.\]
\vspace{0.15cm}

Combining the above two cases, we finally find all the forbidden eigenvalues.
\begin{theorem}\label{thm37}
Let 
\[\begin{cases}
	S_{n,d}(\lambda)=T_n(t)+(2^d-1)T_{n-1}(t),\\
	A_{n,d}(\lambda)=U_{n-1}(t)+U_{n-2}(t),
\end{cases}\]
where $t=1-(2^d-1)\lambda$. The set of forbidden eigenvalues $\pounds$ is 
\[\pounds=\{\frac{2^d}{2^d-1}\}\bigcup\{\text{roots of }S_{n,d}\}\bigcup\{\text{roots of }A_{n,d}\}.\]

If $\lambda\notin\pounds$ and $f$ is an (Neumann) eigenfunction on $V_m$ with eigenvalue $\lambda_m=R_{n,d}(\lambda)$, then we can extend $f$ uniquely to be an (Neumann) eigenfunction on $V_{m+1}$ with eigenvalue being $\lambda$.
\end{theorem}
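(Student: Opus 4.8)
The plan is to read the reverse (extension) direction as a statement about a single square linear system and to locate exactly when it degenerates. Fix an $m$-cell $K_w$ and keep the notation $J_{i,j},L_{i,j,k}$ of this subsection. The equations (\ref{eqn33}), (\ref{eqn34}), (\ref{eqn35}) form a linear system with exactly $\#(V_1\setminus V_0)$ equations and the same number of unknowns (the interior values of $f$ on $F_wV_1$), in which the boundary values $J_{i,1}=f(F_wq_i)$ enter only as data. Thus for a fixed $\lambda$ the extension exists and is unique for every prescribed boundary vector $(J_{i,1})_i$ precisely when the associated homogeneous system, obtained by setting $J_{i,1}=0$ for all $i$, has only the trivial solution. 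By definition $\pounds$ is the set of $\lambda$ for which this homogeneous system is degenerate, so the first task is to compute it and the second is to deduce the extension statement from non-degeneracy.

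To compute $\pounds$ I would first isolate the value $\lambda=\frac{2^d}{2^d-1}$. There Lemma \ref{lemma33} already exhibits the degeneracy: the individual equations (\ref{eqn33}) become vacuous once their sum is imposed, so the $L_{i,j,k}$ are free (subject only to the summed relation, which is homogeneous when $J_{i,1}=0$), producing nontrivial solutions, whence $\frac{2^d}{2^d-1}\in\pounds$. For $\lambda\neq\frac{2^d}{2^d-1}$, Lemma \ref{lemma33} eliminates every $L_{i,j,k}$ in favour of the $J$'s, and the branch recursion (\ref{eqn37}) of Lemma \ref{lemma34} together with $J_{i,1}=0$ gives $J_{i,j}=U_{j-2}(t)J_{i,2}$ for all $j$, so the entire homogeneous solution is parametrized by the vector $(J_{i,2})_{i=1}^{2^d}$. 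The only equations not yet used are the $2^d$ central equations of Lemma \ref{lemma36}; substituting $J_{i,n}=U_{n-2}(t)J_{i,2}$ and $J_{i,n+1}=U_{n-1}(t)J_{i,2}$ turns them into a single $S_{2^d}$-invariant linear system $\big(A\,\mathbf 1\mathbf 1^{\!\top}-(A+B)I\big)(J_{i,2})_i=0$ with $A=U_{n-2}(t)$ and $B=\frac{t+N}{t+1}U_{n-1}(t)+\frac{N-1}{t+1}U_{n-2}(t)$.

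The decomposition $l(\{1,\dots,2^d\})=\mathbb{R}\mathbf 1\oplus\{v:\sum_i v_i=0\}$ diagonalizes this matrix: on the symmetric line $\mathbf 1$ the eigenvalue is $NA-B$, and on the antisymmetric hyperplane it is $-(A+B)$. Setting $NA-B=0$ and clearing denominators reproduces $(t+N)U_{n-1}(t)-(tN+1)U_{n-2}(t)=0$, which by $tU_{n-1}-U_{n-2}=T_n$ and $U_{n-1}-tU_{n-2}=T_{n-1}$ becomes $T_n(t)+NT_{n-1}(t)=S_{n,d}(\lambda)=0$; setting $A+B=0$ gives $(t+N)\big(U_{n-1}(t)+U_{n-2}(t)\big)=0$, i.e. either the already-known value $t=-N$ (that is $\lambda=\frac{2^d}{2^d-1}$) or $A_{n,d}(\lambda)=0$. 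These are exactly the symmetric and antisymmetric computations already carried out in the text, and since every homogeneous solution lies in one of the two invariant subspaces, this shows $\pounds=\{\frac{2^d}{2^d-1}\}\cup\{S_{n,d}=0\}\cup\{A_{n,d}=0\}$.

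Finally, for the extension I would fix $\lambda\notin\pounds$ and $f$ on $V_m$ with $-\Delta_m f=R_{n,d}(\lambda)f$ on $V_m\setminus V_0$. In each $m$-cell the interior system (\ref{eqn33})--(\ref{eqn35}) is now non-degenerate, hence uniquely solvable for the boundary data $f|_{V_m}$; since two distinct $m$-cells meet only along $V_m$, these per-cell solutions glue into a single well-defined $f$ on $V_{m+1}$, uniqueness being inherited cell by cell, and (\ref{eqn31}) holds automatically at every vertex of $V_{m+1}\setminus V_m$. The point that needs care, and which I expect to be the main obstacle, is (\ref{eqn31}) at the old vertices $x\in V_m\setminus V_0$ (and, for the Neumann statement, at $x\in V_0$): there $x$ is a boundary vertex of each incident cell, so no interior equation was imposed at it. The resolution is that the computation proving Theorem \ref{thm32} is really an identity — once the interior equations hold, eliminating all interior values rewrites the single equation (\ref{eqn31}) at $x$, summed over the cells containing $x$, as precisely the equation (\ref{eqn32}) at $x$ with eigenvalue $R_{n,d}(\lambda)$, this being the per-cell relation $\sum_{i\neq1}J_{i,1}=N\big(1-R_{n,d}(\lambda)\big)J_{1,1}$ read backwards and added over the incident cells. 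Since $f|_{V_m}$ satisfies (\ref{eqn32}) by hypothesis, (\ref{eqn31}) follows at $x$, and the Neumann case is identical because the same identity is what equated the two boundary conditions. The delicate bookkeeping is thus matching the within-cell neighbour sums at a genuine (two-cell) junction to the single $V_m$-Laplacian at that vertex.
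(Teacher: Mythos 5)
Your proposal is correct and follows essentially the same route as the paper: the same square-system framing of degeneracy, the same elimination of the $L_{i,j,k}$ via Lemma \ref{lemma33} with the arm parametrization $J_{i,j}=U_{j-2}(t)J_{i,2}$, and your diagonalization of $A\,\mathbf{1}\mathbf{1}^{\top}-(A+B)I$ on $\mathbb{R}\mathbf{1}\oplus\{v:\sum_i v_i=0\}$ is exactly the paper's symmetric/antisymmetric decomposition (with the sufficiency of the two cases, which the paper merely asserts, made explicit), yielding the identical equations $S_{n,d}(\lambda)=0$ and $(t+N)A_{n,d}(\lambda)=0$. Your final paragraph likewise elaborates, correctly, the paper's one-line justification of the extension step: the ``one to one correspondence between $\sum_{i\neq 1}J_{i,1}$ and $J_{1,2}+\sum_{k\neq 1,2^d}L_{1,1,k}$'' invoked there is precisely your per-cell relation $\sum_{i\neq 1}J_{i,1}=N\big(1-R_{n,d}(\lambda)\big)J_{1,1}$ read backwards at old vertices, summed over the incident cells at junctions.
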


The second part of the claim follows easily from Subsection 3.1., by a one to one correspondence between $\sum_{i\neq 1}J_{i,1}$ and $J_{1,2}+\sum_{k\neq 1,2^d}L_{1,1,k}$ when $\lambda\notin\pounds$. \vspace{0.2cm}

Finally, we point out that $\#\pounds=2n$, which means $S_{n,d}$ has $n$ different roots, and $A_{n,d}$ has $n-1$ different roots. This can observed with the property of Chebyshev polynomials, noticing that $T_n(\cos\theta)=n\cos\theta$ and $U_n(\cos\theta)=\frac{\sin\big((n+1)\theta\big)}{\sin\theta}$. Readers can find a proof in Zhou's paper \cite{Z1} Proposition 10, where the arguments essentially work here.

\section{Neumann and Dirichlet eigenfunctions}
For convenience, we fix $n,d$ in this section, and we let 
\[\psi_0,\psi_1,\psi_2,\cdots,\psi_{2n-2}\]
be the inverses of $R_{n,d}$, listed in increasing order. To see that all these branches are well defined to be real functions on $[0,\frac{2^d}{2^d-1}]$, we need to refer to the observation by Zhou \cite{Z1}. 

\begin{lemma}\label{lemma41}
	Let $N=2^d-1$ and $t=1-N\lambda$ as in the last section.
	
	(a). $R_{n,d}(\lambda)=\lambda\cdot A_{n,d}(\lambda)\cdot \big(U_{n-1}(t)+NU_{n-2}(t)\big)$. 
	
	(b). $NR_{n,d}(\lambda)-2^d=S_{n,d}(\lambda)\big(U_{n-2}(t)-U_{n-1}(t)\big)$.
\end{lemma}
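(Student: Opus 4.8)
The plan is to reduce both identities in Lemma \ref{lemma41} to a single classical Chebyshev identity by rewriting everything as a quadratic form in the two quantities $U_{n-1}(t)$ and $U_{n-2}(t)$, which I will treat as independent symbols throughout the algebra. First I would eliminate $\lambda$ in favour of $t$ via $\lambda=(1-t)/N$, which gives $\lambda-1=-(t+N-1)/N$ and $t+\lambda+1=\big((N-1)t+N+1\big)/N$, so that $NR_{n,d}=N-(t+N-1)U_{n-1}^2+\big((N-1)t+N+1\big)U_{n-1}U_{n-2}-NtU_{n-2}^2$. I would likewise convert the first-kind polynomials in $S_{n,d}$ using $T_n(t)=tU_{n-1}(t)-U_{n-2}(t)$ and $T_{n-1}(t)=U_{n-1}(t)-tU_{n-2}(t)$, which turns $S_{n,d}$ into $(t+N)U_{n-1}-(1+Nt)U_{n-2}$. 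After this substitution every object in the statement is an affine-or-quadratic polynomial in $U_{n-1},U_{n-2}$, and the whole proof becomes a matter of matching coefficients.

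For part (a), I would multiply through by $N$ and expand the right-hand side as $(1-t)(U_{n-1}+U_{n-2})(U_{n-1}+NU_{n-2})=(1-t)\big(U_{n-1}^2+(N+1)U_{n-1}U_{n-2}+NU_{n-2}^2\big)$. Subtracting this from the expression for $NR_{n,d}$ above and collecting the coefficients of $U_{n-1}^2$, $U_{n-1}U_{n-2}$, $U_{n-2}^2$ and the constant, the difference collapses to $N\big(1-U_{n-1}^2+2tU_{n-1}U_{n-2}-U_{n-2}^2\big)$, i.e. $N$ times $1-\big(U_{n-1}^2-2tU_{n-1}U_{n-2}+U_{n-2}^2\big)$.

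For part (b), I would instead compare $NR_{n,d}-2^d$, using $2^d=N+1$, against the expansion of $S_{n,d}\big(U_{n-2}-U_{n-1}\big)=\big[(t+N)U_{n-1}-(1+Nt)U_{n-2}\big]\big(U_{n-2}-U_{n-1}\big)$. The same coefficient bookkeeping shows the difference collapses to $-1+\big(U_{n-1}^2-2tU_{n-1}U_{n-2}+U_{n-2}^2\big)$. Thus both parts reduce to the single identity $U_{n-1}^2(t)-2tU_{n-1}(t)U_{n-2}(t)+U_{n-2}^2(t)=1$, and recognizing that (a) and (b) close up against the \emph{same} identity is the organizing observation I would rely on.

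To finish I would prove that identity: the three-term recurrence gives $2tU_{n-1}=U_n+U_{n-2}$, so the left-hand side equals $U_{n-1}^2-U_nU_{n-2}$, and this is the classical Cassini/Tur\'an identity $U_{n-1}^2-U_nU_{n-2}=1$ for Chebyshev polynomials of the second kind, most transparently verified from $U_{n-1}(\cos\theta)=\sin(n\theta)/\sin\theta$ together with the product-to-sum formula $\sin((n+1)\theta)\sin((n-1)\theta)=\sin^2(n\theta)-\sin^2\theta$. There is no conceptual obstacle here; the only genuine work is the coefficient matching in the two expansions, and the sole place care is required is keeping track of the substitution $\lambda=(1-t)/N$ consistently so that the constant terms ($+N$ in (a), $-2^d=-(N+1)$ in (b)) come out exactly as needed for the residual to be the Chebyshev identity.
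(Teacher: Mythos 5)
Your proposal is correct and follows essentially the same route as the paper: both expand $R_{n,d}$, $S_{n,d}$, $A_{n,d}$ as quadratics in $U_{n-1}(t)$, $U_{n-2}(t)$ and observe that parts (a) and (b) each collapse to the residual $1-U_{n-1}^2(t)+2tU_{n-1}(t)U_{n-2}(t)-U_{n-2}^2(t)$, which vanishes by $U_n=2tU_{n-1}-U_{n-2}$ together with the classical identity $U_{n-1}^2-U_nU_{n-2}=1$. The only difference is cosmetic: you verify that well-known identity via the product-to-sum formula, whereas the paper simply cites it.
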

\begin{proof}
(a). By direct computation, 
\[
\begin{aligned}
R_{n,d}(\lambda)-\lambda\cdot A_{n,d}(\lambda)\cdot \big(U_{n-1}(t)+NU_{n-2}(t)\big)&=1-U_{n-1}^2(t)+2tU_{n-1}U_{n-2}(t)-U_{n-2}^2(t),\\
&=1-U_{n-1}^2(t)+U_{n-2}(t)U_{n}(t),\\
&=0,
\end{aligned}\]
where the second equality is due to the fact $U_n(t)=2tU_{n-1}(t)-U_{n-2}(t)$ and the last equality is a well-known equality of Chebyshev polynomials.

(b).  We have \\
$$NR_{n,d}(\lambda)-2^d=N+N(\lambda-1)U_{n-1}^2(t)+(Nt+N\lambda+N)U_{n-1}(t)U_{n-2}(t)-NtU_{n-2}^2(t)-2^d$$ \\
Thus,

$\\NR_{n,d}(\lambda)-2^d - S_{n,d}(\lambda)\big(U_{n-2}(t)-U_{n-1}(t)\big) \\$
$= N+N(\lambda-1)U_{n-1}(t)^2+(Nt+N\lambda+N)U_{n-1}(t)U_{n-2}(t)-NtU_{n-2}(t)^2-2^d -N -1 -(tU_{n-1}(t)-U_{n-2}(t)+NU_{n-1}(t)-NtU_{n-2}(t))\big(U_{n-2}(t)-U_{n-1}(t)\big),\\
= -1 + N(\lambda-1)U_{n-1}(t)^2+(Nt+N\lambda+N)U_{n-1}(t)U_{n-2}(t)-NtU_{n-2}(t)^2-2^d -((t +N)U_{n-1}(t)+ (-1-Nt)U_{n-2}(t))\big(U_{n-2}(t)-U_{n-1}(t)\big), \\
= -1 + N(\lambda-1)U_{n-1}^2(t)+(Nt+N\lambda+N)U_{n-1}(t)U_{n-2}(t)-NtU_{n-2}(t)^2-2^d - ((-t-N)U_{n-1}(t)^2+(t+N+1+Nt)U_{n-1}(t)U_{n-2}(t)+(-1-Nt)U_{n-2}(t)^2), \\
= -1 + (N\lambda + t)U_{n-1}^2(t) +(N\lambda-t-1)U_{n-1}(t)U_{n-2}(t)+U_{n-2}^2(t), \\
= -1 + U_{n-1}^2(t)+(-2t)U_{n-1}(t)U_{n-2}(t)+U_{n-2}^2(t), \\
= -(1-U_{n-1}^2(t)+2tU_{n-1}U_{n-2}(t)-U_{n-2}^2(t)),\\
= - (1-U_{n-1}^2(t)+U_{n-2}(t)U_{n}(t)),\\
= 0$ \\
where the second to last equality is due to $U_n(t)=2tU_{n-1}(t)-U_{n-2}(t)$ and the last equality is a well-known equality of Chebyshev polynomials, as in part a.
\end{proof}

In particular, the observation shows that $R_{n,d}(\lambda)=0$ and $R_{n,d}(\lambda)=\frac{2^d}{2^d-1}$ have $2n-1$ different roots in $[0,1]$, by using the properties of Chebyshev polynomials. See \cite{Z1} Proposition 10 for the details. In particular, this implies that $R_{n,d}(\lambda)=\lambda'$ has $2n-1$ different roots in $[0,1]$ provided that $\lambda'\in [0,\frac{2^d}{2^d-1}]$.

With the above discussions, we now can define compositions of $\psi_l,0\leq l\leq 2n-2$. For short, let $v\in \{0,1,2,\cdots,2n-2\}^m$, we define
\[\psi_v=\psi_{v_m}\circ \psi_{v_{m-1}}\circ \cdots \psi_{v_2}\circ \psi_{v_1}\]
We reverse the order since each time we apply an additional $\psi_m$ on the left, which represents the decimation of eigenvalues. In particular, we set $\psi_\emptyset$ to be the identity map. 

\subsection{Neumann eigenfunctions}
We briefly talk about Neumann eigenfunctions. The following result is the same as Theorem 14 of \cite{Z1}. 	For convenience, we write 
\[\Lambda_m=\bigcup_{m'=0}^m\{0,1,2,3,\cdots,2n-2\}^{m'}.\]
For each $v=v_1v_2\cdots v_l\in \Lambda_m$ (clearly $l\leq m$), we write $l=|v|$. In particular, we write $0=|\emptyset|$.

\begin{theorem}\label{thm42}
	The set of Neumann eigenfunctions is 
	\[\begin{aligned}
	\sigma_{m,N}=\big\{\psi_v(\frac{2^d}{2^d-1}):&v\in \Lambda_m, v=\emptyset\text{ or }v_1=1,3,5,\cdots, 2n-3\big\}\\
	&\bigcup\big\{\psi_v(0):v\in \Lambda_m, v=\emptyset\text{ or }v_1=2,4,6,\cdots, 2n-2\big\}.
	\end{aligned}\]
	The multiplicities of the Neumann eigenvalues are as follows
	\[\begin{cases}
		M_{m,N}(\psi_v(\frac{2^d}{2^d-1}))=(2^dn-2^d+1)^{m-|v|}(2^d-2)+1,&\text{ with  }v=\emptyset\text{ or }v_1=1,3,5,\cdots 2n-3,\\
		M_{m,N}(\psi_v(0))=1, &\text{ with  }v=\emptyset\text{ or }v_1=2,4,6,\cdots 2n-2.
	\end{cases}
	\]
\end{theorem}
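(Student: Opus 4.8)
The plan is to exploit the spectral decimation recipe from Subsection 3.1 together with the forbidden eigenvalue analysis of Subsection 3.2, building the Neumann spectrum level by level. The guiding principle is that every Neumann eigenfunction on $V_m$ is obtained by the decimation procedure: either it descends (via Theorem \ref{thm32}) to a genuine Neumann eigenfunction on a lower level $V_{m'}$ with $m'<m$, in which case its eigenvalue $\lambda$ satisfies $R_{n,d}(\lambda)=\lambda'$ for some lower-level Neumann eigenvalue $\lambda'$; or it is \emph{new} at level $m$, meaning it dies at one more decimation step, which forces its eigenvalue to be a forbidden eigenvalue. I would therefore set up a downward induction on $m$, tracking how the two root sets of $R_{n,d}(\lambda)=0$ and $R_{n,d}(\lambda)=\frac{2^d}{2^d-1}$ (each of size $2n-1$ by Lemma \ref{lemma41}) interact with the forbidden set $\pounds$.

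First I would identify the base-level ($m=0$) Neumann data on the complete graph $G_0=(V_0,E_0)$: the constant function with eigenvalue $0$, and the $(2^d-1)$-dimensional eigenspace with eigenvalue $\frac{2^d}{2^d-1}$. These are the seeds $\psi_\emptyset(0)$ and $\psi_\emptyset(\frac{2^d}{2^d-1})$. Next I would run one decimation step: given a Neumann eigenvalue $\lambda'$ at level $m-1$, Theorem \ref{thm37} says each non-forbidden preimage $\lambda=\psi_l(\lambda')$ lifts uniquely, and the key combinatorial input is Lemma \ref{lemma41}, which factors $R_{n,d}$ and $NR_{n,d}-2^d$ through $A_{n,d}$, $S_{n,d}$, and auxiliary Chebyshev factors. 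Reading off these factorizations, I would determine exactly which branches $\psi_l$ send the two special values $0$ and $\frac{2^d}{2^d-1}$ back into $\{0,\frac{2^d}{2^d-1}\}$ versus into $\pounds$; the parity structure (odd indices $v_1=1,3,\dots,2n-3$ attached to $\frac{2^d}{2^d-1}$, even indices $v_1=2,4,\dots,2n-2$ attached to $0$) should fall out of the sign pattern of $U_{n-1}(t)\pm U_{n-2}(t)$ and the ordering $\psi_0<\psi_1<\cdots<\psi_{2n-2}$. This pins down the two families indexing $\sigma_{m,N}$.

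For the multiplicities I would argue separately for the two types. For $\psi_v(0)$ the claim is multiplicity $1$, which I expect to follow because the eigenvalue $0$ corresponds to the constants (a one-dimensional space preserved under unique lifting), so no new multiplicity is ever generated — each such eigenfunction is the unique lift of the constant. For $\psi_v(\frac{2^d}{2^d-1})$ the formula $(2^dn-2^d+1)^{m-|v|}(2^d-2)+1$ reflects that $\frac{2^d}{2^d-1}$ is itself forbidden: at the level $|v|$ where the eigenfunction is born, the forbidden eigenvalue $\frac{2^d}{2^d-1}$ contributes a large localized eigenspace (dimension governed by $2^d-2$, the antisymmetric directions within each cell), and each subsequent decimation multiplies the count of available cells by the number of contractions $2^dn-2^d+1$, the extra $+1$ accounting for a single globally-symmetric mode. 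I would make this precise by counting the dimension of the forbidden-eigenvalue solution space of the cell system $(\ref{eqn33},\ref{eqn34},\ref{eqn35})$ with zero boundary data, then summing over cells at level $m$.

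The hard part will be the multiplicity bookkeeping for $\frac{2^d}{2^d-1}$: I must show that the localized eigenfunctions supported on distinct cells are linearly independent and that together with the one global symmetric mode they span the entire eigenspace, with no unexpected collapse or overcounting across the $2^dn-2^d+1$ cells and the full $S_{2^d}$ symmetry. This requires carefully resolving the degenerate case $\lambda=\frac{2^d}{2^d-1}$ of Lemma \ref{lemma33}, where the $L_{i,j,k}$ acquire genuine extra freedom, and organizing that freedom by symmetry type. Since the statement is identical to Theorem 14 of \cite{Z1} and the decimation machinery carries over verbatim, I would finally verify that the planar counting argument generalizes by checking that the only dimension-dependent inputs are $N=2^d-1$ and the cell count $2^dn-2^d+1$, which already appear correctly in the stated formulas.
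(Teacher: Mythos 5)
Your overall strategy matches the paper's: seed the spectrum at level $0$, use Theorem \ref{thm37} and the factorizations of Lemma \ref{lemma41} to determine the admissible branches (and the odd/even parity of $v_1$ does indeed come from the factor structure $R_{n,d}=\lambda\cdot A_{n,d}\cdot(U_{n-1}+NU_{n-2})$ and the interlacing of roots), and close with a counting argument. But at the one step you yourself flag as hard --- the multiplicity of $\frac{2^d}{2^d-1}$ --- your route has a genuine gap, and the paper's device that fills it is missing from your proposal. You plan to build localized eigenfunctions out of the degenerate freedom in Lemma \ref{lemma33}, prove their linear independence across cells, and show they span together with a global symmetric mode; none of this is carried out, and organizing the degenerate $L_{i,j,k}$-freedom ``by symmetry type'' across junction vertices is exactly where such an argument gets delicate. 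The paper avoids the construction entirely: $f$ is a Neumann eigenfunction of $\Delta_m$ with eigenvalue $\frac{2^d}{2^d-1}$ if and only if $\sum_{i=1}^{2^d} f(F_w q_i)=0$ for every $m$-cell $w$ (each cell is a complete graph $K_{2^d}$, so the eigenvalue equation at each vertex reduces to vanishing cell sums). That is $(2^dn-2^d+1)^m$ independent linear constraints on $\#V_m=(2^dn-2^d+1)^m(2^d-1)+1$ variables, giving the dimension $(2^dn-2^d+1)^m(2^d-2)+1$ in one line, with no basis construction needed. Relatedly, your stated mechanism is wrong as written: unique lifting through non-forbidden values \emph{preserves} multiplicity exactly, so nothing is ``multiplied by the number of contractions at each subsequent decimation.'' The factor $(2^dn-2^d+1)^{m-|v|}$ is simply the cell count at the birth level $m-|v|$, where $M_{m,N}(\psi_v(\frac{2^d}{2^d-1}))=M_{m-|v|,N}(\frac{2^d}{2^d-1})$; your description would, if taken literally, produce growing multiplicities along the lift and the wrong formula.

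The second soft spot is completeness. Your opening dichotomy (an eigenfunction either restricts to a lower-level Neumann eigenfunction or has forbidden eigenvalue) does not by itself exclude the roots of $S_{n,d}$ and $A_{n,d}$ from the Neumann spectrum --- these values genuinely occur in the Dirichlet spectrum (Theorem \ref{thm43}), so some argument must show they never assemble into functions satisfying the Neumann condition on $V_0$. The paper settles this, as Zhou does, by the explicit dimension identity: summing the claimed multiplicities over both families gives exactly $\#V_m$, and since $\Delta_m$ is self-adjoint with respect to the degree-weighted inner product, the spectrum is exhausted. You defer to ``the planar counting argument generalizes,'' which is the right instinct, but that identity is the load-bearing step of the whole proof and needs to be verified, not just invoked; your checklist of dimension-dependent inputs ($N=2^d-1$ and the cell count $2^dn-2^d+1$) is correct but incomplete without it.
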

\begin{proof}
The proof is done by a standard counting argument, which is essentially the same as \cite{Z1}. First, by a same proof as Proposition 10 equation (4.2) of \cite{Z1}, one can see that 
\[\psi_l(\frac{2^d}{2^d-1})\notin\pounds,\qquad\text{ if }l=1,3,5,\cdots, 2n-3,\]
and by Lemma \ref{lemma41}, we have $\psi_l(\lambda)\notin \pounds$ for any $\lambda\in (0,\frac{2^d}{2^d-1})$ and $0\leq l\leq 2n-2$. So the decimation recipe works perfectly to see
\[\big\{\psi_v(\frac{2^d}{2^d-1}):v\in \Lambda_m, v=\emptyset\text{ or }v_1=1,3,5,\cdots, 2n-3\big\}\subset \sigma_{m,N},\]
and similarly one can check that 
\[\big\{\psi_v(0):v\in \Lambda_m, v=\emptyset\text{ or }v_1=2,4,6,\cdots, 2n-2\big\}\subset \sigma_{m,N}.\]

Next, we consider the multiplicity of eigenfunctions. Clearly, the descendants of $0$ have multiplicity $1$ each. We consider eigenvalues generated from $\frac{2^d}{2^d-1}$ in the following, $f\in l(V_m)$ is a Neumann eigenfunction of $\Delta_m$ with eigenvalue $\frac{2^d}{2^d-1}$, if and only if  $\sum_{i=1}^{2d}f(F_wq_i)=0$ for any $w\in W_m$. This provides 
\[(2^d n-2^d+1)^m\] 
linearly independent equations, and we have 
\[\# V_m=(2^dn-2^d+1)^m(2^d-1)+1\]
variables, so the mutliplicity of $\frac{2^d}{2^d-1}$ as a Neumann eigenvalue of $-\Delta_m$ is 
\[M_{m,N}(\frac{2^d}{2^d-1})=(2^dn-2^d+1)^m(2^d-1)+1-(2^d n-2^d+1)^m=(2^dn-2^d+1)^m(2^d-2)+1.\]
By using spectral decimation recipe, we can then see the multiplicity of $\psi_v(\frac{2^d}{2^d-1})$ as a Neumann eigenvalue of $-\Delta_m$ is 
\[M_{m,N}\big(\psi_v(\frac{2^d}{2^d-1})\big)=M_{m-|v|,N}\big(\frac{2^d}{2^d-1}\big)=(2^dn-2^d+1)^{m-|v|}(2^d-2)+1.\]
Finally, we need to check that we have all the Neumann eigenvalues, which is verified by the following equality
\[\begin{aligned}
\# V_m=&\big((2^dn-2^d+1)^m(2^d-2)+1\big)\\
&+\sum_{m'=0}^{m-1}(n-1)(2n-1)^{m'}\big((2^dn-2^d+1)^{m-m'-1}(2^d-2)+1\big)\\
&+1+(n-1)\sum_{m'=0}^{m-1} (2n-1)^{m'}.
\end{aligned}\]
\end{proof}

\subsection{Dirichlet eigenfunctions}
Clearly, we will have all the forbidden eigenvalues involved when talking about Dirichlet eigenfunctions, compared with Neumann cases, where only descendents of $\frac{2^d}{2^d-1}$ and $0$ are involved. 

Following Zhou \cite{Z1}, we name roots of $S_{n,d}$ as $\alpha_1,\alpha_2,\cdots, \alpha_n$, and name roots of $A_{n,d}$ as $\beta_1,\beta_2,\cdots,\beta_{n-1}$. 

\begin{theorem}\label{thm43}
	The set of Dirichlet eigenfunctions is 
	\[
	\begin{aligned}
	\sigma_{m,D}=\big\{\psi_v(\frac{2^d}{2^d-1}):v\in \Lambda_{m-1}, v=\emptyset\text{ or }v_1=1,3,5,&\cdots, 2n-3\big\}\\
	\bigcup\{\psi_v(\alpha_i):1\leq i\leq n&,v\in \{0,1,2,\cdots,2n-2\}^{m-1}\}\\
	&\bigcup \{\psi_v(\beta_l):1\leq i\leq n-1,v\in \Lambda_{m-1}\}. 
    \end{aligned}
    \]
	The multiplicities of the Dirichlet eigenvalues are as follows
	\[
	\begin{cases}
		M_{m,D}(\psi_v(\frac{2^d}{2^d-1})=(2^dn-2^d+1)^{m-|v|}(2^d-2)-2^n+1,&\text{ with  }v=\emptyset\text{ or }v_1=1,3,5,\cdots 2n-3,\\
		M_{m,D}(\psi_v(\alpha_i))=1, &\text{ with  }1\leq i\leq n,v\in \{0,1,2,\cdots, 2n-2\}^{m-1},\\
		M_{m,D}(\psi_v(\beta_i))=2^d-1, &\text{ with  }1\leq i\leq n-1,v\in \Lambda_{m-1}.
	\end{cases}
	\]
\end{theorem}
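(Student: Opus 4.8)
The plan is to follow the counting scheme used for Theorem \ref{thm42}, but now with the full set of forbidden eigenvalues in play. By Theorems \ref{thm32} and \ref{thm37}, for $\lambda\notin\pounds$ the restriction map is a bijection between Dirichlet eigenfunctions of $\Delta_m$ with eigenvalue $\lambda$ and Dirichlet eigenfunctions of $\Delta_{m-1}$ with eigenvalue $R_{n,d}(\lambda)$ (the condition $f|_{V_0}=0$ is preserved by both the restriction and the unique extension). Hence every Dirichlet eigenvalue of $\Delta_m$ is a descendant $\psi_v(\lambda_0)$ of some $\lambda_0\in\pounds=\{\frac{2^d}{2^d-1}\}\cup\{\alpha_i\}\cup\{\beta_l\}$, and the theorem reduces to (i) computing the multiplicity of each forbidden value at the level where it is born, and (ii) checking that the three resulting towers exhaust $l(V_m\setminus V_0)$. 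The organizing input is Lemma \ref{lemma41}: since $A_{n,d}(\beta_l)=0$ we get $R_{n,d}(\beta_l)=0$, and since $S_{n,d}(\alpha_i)=0$ we get $R_{n,d}(\alpha_i)=\frac{2^d}{2^d-1}$. These two identities dictate the birth structure of the two new families.

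First I would treat the antisymmetric family $\beta_l$. Because $R_{n,d}(\beta_l)=0$, any Dirichlet eigenfunction of $\Delta_m$ at $\beta_l$ restricts to a Dirichlet eigenfunction of $\Delta_{m-1}$ at eigenvalue $0$; but a discrete harmonic function vanishing on the boundary is identically $0$ by the maximum principle, so such eigenfunctions vanish on $V_{m-1}$ and are supported on the freshly added vertices, subject to a vanishing flux (discrete normal derivative) constraint at every vertex of $V_{m-1}\setminus V_0$. These flux constraints couple neighbouring cells into one global system; decomposing it under the $S_{2^d}$ action and matching with the antisymmetric case of Subsection 3.2 (whose solvability condition is exactly $A_{n,d}=0$), I would show the solution space is the antisymmetric representation, of dimension $2^d-1$, and that $\beta_l$ is reborn fresh at every level. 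The symmetric family $\alpha_i$ is dual: $R_{n,d}(\alpha_i)=\frac{2^d}{2^d-1}$ shows its eigenfunctions restrict to $\frac{2^d}{2^d-1}$--eigenfunctions one level down rather than to zero, the symmetric case of Subsection 3.2 (solvability condition $S_{n,d}=0$) is one--dimensional, and analysing the forbidden--value extension of the $\frac{2^d}{2^d-1}$ space shows that no new $\alpha_i$ mode appears for $m\geq 2$. Thus $\alpha_i$ is born only at level $1$ with multiplicity $1$, so only its length--$(m-1)$ descendants $\psi_v(\alpha_i)$, $v\in\{0,\dots,2n-2\}^{m-1}$, occur at level $m$.

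For the exceptional value $\frac{2^d}{2^d-1}$ I would reuse the characterization from the proof of Theorem \ref{thm42}: $f$ is a $\frac{2^d}{2^d-1}$--eigenfunction of $\Delta_m$ iff $\sum_{i=1}^{2^d} f(F_wq_i)=0$ for every $w\in W_m$. Its Dirichlet multiplicity is the dimension of this space intersected with $\{f|_{V_0}=0\}$, a finite linear--algebra computation (the Neumann count minus the independent constraints contributed by the $2^d$ boundary vertices, after removing the redundancy already present among the defining relations), which gives the stated base value; the multiplicity of a descendant $\psi_v(\frac{2^d}{2^d-1})$ is then the base value at level $m-|v|$. Throughout, Lemma \ref{lemma41} guarantees $\psi_l(\lambda)\notin\pounds$ for $\lambda$ in the open interval, so the branches never collide with forbidden values during propagation, and the parity restriction on the admissible first letter $v_1$ is inherited verbatim from Theorem \ref{thm42}. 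Finally I would verify completeness by summing the three towers over their index sets and checking that the total equals $\#(V_m\setminus V_0)=(2^dn-2^d+1)^m(2^d-1)+1-2^d$, by the same geometric--series bookkeeping as at the end of the proof of Theorem \ref{thm42}.

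The hard part will be step (i), the forbidden--value base multiplicities: making rigorous why the symmetric mode $\alpha_i$ is born once (multiplicity $1$, at fixed depth) while the antisymmetric mode $\beta_l$ is reborn at every level with multiplicity $2^d-1$. This rests on the $S_{2^d}$ isotypic decomposition combined with the cell--boundary flux conditions that a fresh mode must satisfy to be a genuine eigenfunction of the global $\Delta_m$, together with the precise linear algebra for the $\frac{2^d}{2^d-1}$ space. Once these base dimensions are established, the propagation and the final completeness identity are routine, being word--for--word parallel to the Neumann argument of Theorem \ref{thm42} and to \cite{Z1}.
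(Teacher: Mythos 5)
Your proposal is correct and follows the same overall scheme as the paper's proof: treat the forbidden values as the only possible birthplaces of Dirichlet eigenvalues, compute the base multiplicities, propagate them through the decimation bijection of Theorems \ref{thm32} and \ref{thm37}, and confirm completeness by comparing the total against $\#(V_m\setminus V_0)$. The one genuine difference is in how the base multiplicities are pinned down. The paper establishes only \emph{lower} bounds constructively: for every level $k\geq 1$ it exhibits $2^d-1$ linearly independent Dirichlet eigenfunctions at each $\beta_i$ by chaining the antisymmetric level-$1$ modes along the diagonals, and one Dirichlet eigenfunction at each $\alpha_i$ for $\Delta_1$; the exact multiplicities (and the absence of fresh $\alpha_i$-modes at levels $m\geq 2$) are then extracted solely from the closing dimension count. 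You instead propose to determine the fresh eigenspaces directly, organizing the genealogy through Lemma \ref{lemma41} — $R_{n,d}(\beta_l)=0$, so by the maximum principle every $\beta_l$-eigenfunction restricts to zero and is fresh, while $R_{n,d}(\alpha_i)=\frac{2^d}{2^d-1}$ — and then solving the flux constraints $\sum_{y\sim x}f(y)=0$ at vertices $x\in V_{m-1}\setminus V_0$ under the $S_{2^d}$-isotypic decomposition. Your route makes explicit what the paper absorbs into the unproved counting step: a non-junction corner of an $(m-1)$-cell imposes a single-cell constraint that kills the symmetric in-cell mode for $m\geq 2$, whereas junction corners let the antisymmetric modes chain along the $2^d$ diagonal arms subject to one relation at the center, giving dimension $2^d-1$ at every level; the paper's chaining construction is more concrete and cheaper, at the price of leaving all upper bounds to the final count, which both arguments need in identical form. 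One last point worth recording: the multiplicity $(2^dn-2^d+1)^{m-|v|}(2^d-2)-2^n+1$ in the statement contains a typo — the paper's own proof, exactly like your computation (Neumann multiplicity minus the $2^d$ boundary constraints), yields $(2^dn-2^d+1)^{m-|v|}(2^d-2)+1-2^d$, and this is the value consistent with the completeness identity at $m=1$.
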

\begin{proof}
The multiplicity of $\psi_v(\frac{2^d}{2^d-1})$ as a Dirichlet eigenvalue of $\Delta_m$ is $(2^dn-2^d+1)^{m-|v|}(2^d-2)+1-2^d$ for a similar reason as the Neumann case. We have the multiplicity $2^d$ fewer than the Neuamnn cases as we have the boundary values are fixed to be $0$. 

For $\Delta_k$, $k\geq 1$, one can construct $2^d-1$ linearly independent Dirichlet eigenfunctions of $\beta_i$, $1\leq i\leq n-1$, by chaining the antisymmetric eigenfunctions of $\Delta_1$. For $\Delta_1$, one has one Dirichlet eigenfunction of $\alpha_i$, $1\leq i\leq n$. So we have the lower bound of the multiplicity as the same numbers in the statement of the theorem.

Finally, one can use a counting argument to see that we have found all the eigenvalues with the correct multiplicities. 
\end{proof}

\section{The Vicsek set lattices}
Lastly, we have a brief study on the equivalence of infinite Vicsek set lattices. This section is inspired by \cite{T} by A. Teplyaev. The notations are almost the same as Section 2-4, except that we use $0$ to replace the pairs $(i,n)$, $1\leq i\leq 2^d$.

\begin{notation}
	(a). Let $W_0=\{\emptyset\}$, $W_1=\{(i,j):1\leq i\leq 2^d,1\leq j\leq n-1\}\bigcup\{0\}$ and $W_m=W_1^m$ for $m\geq 1$. Write $W_*=\bigcup_{m\geq 0}W_m$, which is the set of finite words. 
	
	(b). Denote $\Sigma=W_1^{\mathbb{N}}$. For $m\geq 1$ and $\omega\in \Sigma$, we write $[\omega]_m=\omega_1\omega_2\cdots\omega_m$; for $m=0$ and $\omega\in \Sigma$, we write $[\omega]_0=\emptyset$. Clearly, $[\omega]_m\in W_m$ for $m\geq 0$.  
	
	(c). For each $w=w_1w_2w_3\cdots w_m\in W_m$, we define \[F_w=F_{w_1}F_{w_2}F_{w_3}\cdots F_{w_m}\text{, and } \phi_w=F_{w_1}^{-1}F_{w_2}^{-1}\cdots F_{w_m}^{-1},\]
	where we set $F_0=F_{i,n}$ for $1\leq i\leq 2^d$. For convenience, we write $F_\emptyset=\phi_\emptyset=Id$, where $Id$ is the identity map.
	
	Notice that $\phi_w=\big(F_{w_m}F_{w_{m-1}}\cdots F_{w_1}\big)^{-1}$, so we do not have $F_w^{-1}=\phi_w$ in general.
	
	(d). Write $V_0=\{q_i\}_{i=1}^{2^d}$ and $\tilde{V}_0=V_0\cup\{q_0\}$, where $q_0=\frac{1}{2^d}\sum_{i=1}^{2^d}q_i$. In addition, we define 
	\[V_m=\bigcup_{w\in W_m}F_wV_0,\quad \tilde{V}_m=\bigcup_{w\in W_m}F_w\tilde{V}_0. \]
	In particular, the definition of $V_m,m\geq 1$ is the same as in Section 2-4.
\end{notation}

We define the Vicsek lattices as below, which depends on $d,n$ and an infinite sequence $\omega\in \Sigma$. For short, we always fix $d,n$ in this section.

\begin{definition}\label{def51} Let $\omega\in \Sigma$.
	
(a). Let $V_{-m}=\phi_{[\omega]_m}V_m$. The Vicsek set lattice corresponding to $\omega$ is defined as $V=\bigcup_{m\geq 0}V_{-m}$. 

(b). Let $\tilde{V}_{-m}=\phi_{[\omega]_m}\tilde{V}_m$. The Vicsek set tree corresponding to $\omega$ is defined as $\tilde{V}=\bigcup_{m\geq 0}\tilde{V}_{-m}$.  
\end{definition}

Notice that there are natural graphs defined on the lattices.

\begin{definition}\label{def52}
Let $\omega\in \Sigma$ and $V,\tilde{V}$ be the corresponding lattices. 
	
(a). Define $E=\big\{\{x,y\}:\text{There exists }m\geq 0 \text{ and }w\in W_m\text{ such that }\{x,y\}\subset \phi_{[\omega]_m}F_wV_{0}\big\}$. 

(b). Define $\tilde{E}=\big\{\{x,y\}:\text{There exists }m\geq 0 \text{ and }w\in W_m\text{ such that }x\in \phi_{[\omega]_m}F_wV_{0}\text{, and }y= \phi_{[\omega]_m}F_wq_0\big\}$.

\noindent Then, $G=(V,E)$ and $\tilde{G}=(\tilde{V},\tilde{E})$ are two infinite graphs. 
\end{definition}

Given two Vicsek set lattices $V$ and $V'$, we say that they are isomorphic if the corresponding graphs $G$ and $G'$ are isomorphic. We define the isomorphism between $\tilde{V}$ and $\tilde{V}'$ in a same manner. It is easy to see that $V$ is isomorphic to $V'$ if and only if $\tilde{V}$ is isomorphic to $\tilde{V}'$.

\begin{lemma}\label{lemma53}
Let $V$ be the Vicsek set lattice generated by $\omega$, and $\tilde{V}$ be the related Vicsek set tree. Also, let $V'$ be the related Vicsek set lattice generated by $\omega'$, and $\tilde{V}'$ be the related Vicsek set tree. Then $V$ is isomorphic to $V'$ if and only if $\tilde{V}$ is isomorphic to $\tilde{V}'$.
\end{lemma}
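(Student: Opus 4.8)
The plan is to show that each of the two graphs $G=(V,E)$ and $\tilde G=(\tilde V,\tilde E)$ can be reconstructed from the other by a purely graph-theoretic, hence isomorphism-invariant, recipe, so that any isomorphism on one side is transported to an isomorphism on the other. Throughout I will call a set of the form $C=\phi_{[\omega]_m}F_wV_0$ (with $m\geq 0$, $w\in W_m$) a \emph{cell} and I will call $\phi_{[\omega]_m}F_wq_0$ its \emph{center}. As noted after Definition \ref{def51}, the lattice $V$ is an increasing union, and a level-$m$ cell reappears among the level-$(m+1)$ cells, so the collection of cells is intrinsic to $V$, tiles $V$, and the centers are exactly the extra vertices $\tilde V\setminus V$ (since $q_0\notin V_0$).

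First I would record the two structural facts that drive everything, both coming from the explicit form of the i.f.s. and the tree structure of $\mcV_n^d$ mentioned in the introduction. (F1) Two distinct cells meet in at most one point, and every point lying in more than one cell lies in \emph{exactly} two cells; in particular the center of a cell is interior to it and is never a meeting point of two cells. (F2) There is no ``triangle'' of cells, i.e.\ one cannot have three pairwise distinct cells meeting pairwise in three distinct points; this is the combinatorial shadow of the tree property. Granting (F1) and (F2), one deduces: the maximal cliques of $G$ are exactly the vertex sets of cells, each a copy of the complete graph on $2^d$ vertices (an edge lies in a unique cell by (F1), and a mutually adjacent triple not contained in one cell would force a cell-triangle, contradicting (F2)); and in $\tilde G$ the centers are precisely the vertices of degree $2^d$, while every corner $x\in V$ has degree equal to the number of cells through $x$, which is $1$ or $2$ by (F1). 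Since $d\geq 2$ gives $2^d\geq 4>2$, the degree alone separates centers from corners in $\tilde G$.

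For the direction $V\cong V'\Rightarrow\tilde V\cong\tilde V'$, I would take a graph isomorphism $\Phi\colon G\to G'$. Because $\Phi$ preserves maximal cliques it maps cells to cells, so I extend $\Phi$ to $\tilde V$ by sending the center of a cell $C$ to the center of the cell $\Phi(C)$; this is well defined and bijective since $\Phi$ induces a bijection of cells. The edges of $\tilde G$ are exactly the center--corner pairs inside a common cell, a relation preserved because $\Phi$ preserves cells and incidence, so the extension is an isomorphism $\tilde G\to\tilde G'$. For the converse, given $\tilde\Phi\colon\tilde G\to\tilde G'$, the degree characterization forces $\tilde\Phi$ to carry centers to centers and corners to corners, so its restriction to the corner set is a bijection $V\to V'$. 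Two corners lie in a common cell precisely when they have a common center-neighbour in $\tilde G$; since $\tilde\Phi$ preserves adjacency and the center/corner partition it preserves this relation, which is exactly $G$-adjacency, so the restriction is an isomorphism $G\to G'$.

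The formal transport of isomorphisms above is routine; the substance of the argument, and the step I expect to be the main obstacle, is establishing (F1) and (F2) cleanly. Concretely, I would verify from the i.f.s.\ that within each finite approximation the smallest cells meet only at the junction points $p_{i,j}$, each shared by exactly two consecutive cells of a branch (or by a branch cell and the central cell), and then argue that the affine maps $\phi_{[\omega]_m}$ together with the nesting of approximations preserve this incidence pattern across the whole blow-up, ruling out higher-order junctions and cell-triangles. Once these geometric facts are in hand, the equivalence follows by the combinatorial transport described above.
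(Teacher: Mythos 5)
Your argument is correct, and there is nothing in the paper to compare it against in detail: the authors state Lemma \ref{lemma53} without proof, having just declared the equivalence ``easy to see.'' Your reconstruction makes the omitted argument rigorous along natural lines, and both halves check out: every edge of $G$ lies in a unique cell (two distinct smallest cells of $\mcV_n^d$ meet in at most one junction point, which is finite ramification, and this incidence pattern is preserved under the blow-up maps $\phi_{[\omega]_m}$ since $\tilde{V}_{-m}\subset \tilde{V}_{-m-1}$ cell-by-cell), any triangle of $G$ lies in a single cell, so the maximal cliques of $G$ are exactly the cell vertex sets (each a $K_{2^d}$ with $2^d\geq 4$), and in $\tilde{G}$ the degree dichotomy $2^d$ versus at most $2$ cleanly separates centers from corners. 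One remark on the step you flag as the main obstacle: (F2) does not need a fresh incidence-geometry verification, because the paper already asserts (and uses, in Definition \ref{def54}) that $\tilde{V}$ is an infinite tree; a triple of cells meeting pairwise at three distinct points would produce a $6$-cycle through their three centers in $\tilde{G}$, so (F2) is immediate from tree-ness, and (F1) reduces to the standard finite-ramification fact about $0$-cells together with the observation that $F_wq_0$ is never a corner point (since $q_0\notin V_m$ for all $m$, as $q_0$ remains the center of the central cell at every level). It is also worth noting that your degree-based identification of centers is consonant with the paper's own toolkit: the proof of Proposition \ref{prop55} characterizes centers of $(-1)$ cells in $\tilde{V}$ precisely as vertices with $2^d$ neighbours each of degree $2$, which is the same idea one level up.
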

 
In the following, we study the isomorphism between Vicsek set lattices. Since the Vicsek set has weaker connectivity than the Sierpinski gasket, the condition that two lattices are isomorphic is looser here. 

As a first step, we simplify the problem to studying a tree defined with $\phi_{[\omega]_m}q_0$. A path between two vertices $x,y$ in a graph is a finite sequence of vertices $x=x_0,x_1,x_2,\cdots,x_l=y$ such that $\{x_j,x_{j+1}\}\in \tilde{E}$. We call the path self-avoiding if $x_i\neq x_j$ for $i\neq j$.  

\begin{definition}\label{def54}
Let $\omega\in \Sigma$ and $\tilde{V}$ be the corresponding Vicsek set tree. 

(a). Since $\tilde{V}$ is an infinite tree, for any $x,y\in \tilde{V}$, we have a unique self-avoiding path $\gamma_{x,y}$ between $x,y$ in $\tilde{V}$.

(b). Write 
\[\Gamma_{\omega,m}=\bigcup_{k,l\geq m} \gamma_{\phi_{[\omega]_k}q_0,\phi_{[\omega]_l}q_0},\]
which is the union of paths between center points of $\tilde{V}_{-k},k\geq m$.  
\end{definition}

In particular, $\Gamma_{\omega,m}$ is a subtree of $\tilde{V}$ with the edges induced from $\tilde{G}$.

\begin{proposition}\label{prop55}
Let $\omega,\omega'\in \Sigma$ and $V,V'$ be the corresponding Vicsek set lattices, then the following are equivalent.
	
(a). There exists $m\geq 1$ and an isomorphism $\Psi:\Gamma_{\omega,m}\to \Gamma_{\omega',m}$ such that $\Psi(\phi_{[\omega]_k}q_0)=\Psi(\phi_{[\omega']_k}q_0),\forall k\geq m$.

(b). $V$ is isomorphic to $V'$.  

(c). $\tilde{V}$ is isomorphic to $\tilde{V}'$. 
\end{proposition}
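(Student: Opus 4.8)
\emph{Proof proposal.} Since Lemma \ref{lemma53} already gives the equivalence $(b)\Leftrightarrow(c)$, the plan is to prove the two implications $(a)\Rightarrow(c)$ and $(c)\Rightarrow(a)$. Throughout I abbreviate the distinguished centers by $c_k:=\phi_{[\omega]_k}q_0$ and $c_k':=\phi_{[\omega']_k}q_0$, so that condition $(a)$ reads $\Psi(c_k)=c_k'$ for all $k\ge m$. Before either implication I would record the structural picture that drives the argument: the tree $\tilde V=\bigcup_{k\ge 0}\tilde V_{-k}$ is an increasing union of finite trees with $c_k$ the center of $\tilde V_{-k}$; because $\tilde V_{-k}$ sits as the $\omega_{k+1}$-subcell of $\tilde V_{-(k+1)}$, the centers $c_k$ are nested and $\Gamma_{\omega,m}$ is precisely the minimal subtree of $\tilde V$ spanning $\{c_k:k\ge m\}$, a ``trunk'' running out to the blow-up direction. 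Every vertex of $\tilde V$ not lying on this trunk sits in a subtree attached to $\Gamma_{\omega,m}$ at a single vertex, and the isomorphism type of the attached piece depends only on the fixed data $(n,d)$ together with the position of the attaching vertex along the trunk (whether it is a center $c_k$, an interior path vertex, or a corner).

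For $(a)\Rightarrow(c)$ I would extend the given center-compatible isomorphism $\Psi\colon\Gamma_{\omega,m}\to\Gamma_{\omega',m}$ to all of $\tilde V$. By the structural picture, $\Psi$ preserves the combinatorial type of every trunk vertex, so the subtree hanging at $x\in\Gamma_{\omega,m}$ and the subtree hanging at $\Psi(x)\in\Gamma_{\omega',m}$ are abstractly isomorphic finite-type Vicsek pieces. The abundance of self-isometries of $\mcV_n^d$ noted in the introduction then lets me pick, at each trunk vertex, an isomorphism of the hanging pieces that agrees with $\Psi$ on the single shared attaching vertex. Gluing these choices yields a bijection $\Phi\colon\tilde V\to\tilde V'$ that respects $\tilde E$ edge by edge, hence a graph isomorphism, proving $(c)$.

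For $(c)\Rightarrow(a)$ I would start from an isomorphism $\Phi\colon\tilde V\to\tilde V'$ and extract a center-compatible trunk isomorphism. As $\Phi$ automatically carries the unique self-avoiding path between two vertices to the unique self-avoiding path between their images, the whole difficulty concentrates in showing that $\Phi$ maps the center sequence $(c_k)$ onto $(c_k')$ up to a fixed index shift. I would aim to characterize the trunk intrinsically, for instance by singling out the centers through a separation property (they are the degree-$2^d$ vertices whose deletion splits $\tilde V$ into $2^d$ infinite components growing at comparable rates) and by identifying the blow-up end as the distinguished end along which such balanced splittings recur at every scale. Granting this, $\Phi$ must send the blow-up end of $\tilde V$ to that of $\tilde V'$, hence match $c_k\mapsto c_{k+s}'$ for some fixed shift $s$ and all large $k$; restricting $\Phi$ to $\Gamma_{\omega,m}$ with $m$ large enough to absorb $s$ produces the desired $\Psi$. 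I expect this intrinsic recognition of the trunk to be the main obstacle. Tree isomorphisms are highly flexible, and the extra self-isometries of the Vicsek set---the very reason the isomorphism criterion is looser than Teplyaev's for the Sierpinski lattice---mean that only the trunk and its centers are graph-invariant while the hanging pieces are not. The crux is therefore a rigidity statement isolating exactly this invariant skeleton, together with careful bookkeeping of the index shift $s$ so that the on-the-nose compatibility $\Psi(c_k)=c_k'$ for all $k\ge m$ can be arranged.
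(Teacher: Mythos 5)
Your reduction $(b)\Leftrightarrow(c)$ via Lemma \ref{lemma53} matches the paper, but both remaining implications rest on structural claims that are false, so the proposal has genuine gaps. For $(c)\Rightarrow(a)$, your proposed intrinsic characterization of the centers fails: \emph{every} vertex of the form $\phi_{[\omega]_m}F_wq_0$ (the center of any $0$-cell) has degree $2^d$ in $\tilde{G}$, and deleting even a distinguished center $c_k=\phi_{[\omega]_k}q_0$ typically leaves only \emph{one} infinite component --- e.g.\ for the corner blow-up $\omega=(1,1)(1,1)\cdots$ the $2^d-1$ non-trunk arms of $\tilde{V}_{-k}$ hanging at $c_k$ are all finite --- so ``deletion splits $\tilde{V}$ into $2^d$ infinite components of comparable growth'' singles out nothing. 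The paper's mechanism is purely local and avoids ends altogether: a vertex is the center of a $(-1)$-cell iff it has exactly $2^d$ neighbours, each of degree $2$; hence any isomorphism $\Phi$ restricts to the sparser trees $\tilde{V}^{(-1)}\to\tilde{V}'^{(-1)}$, and iterating, maps $(-m)$-cells to $(-m)$-cells of the \emph{same} level for every $m$. Exhaustion then gives $m_0$ with $\tilde{V}'_0\subset\Phi(\tilde{V}_{-m_0})$, forcing $\Phi(\tilde{V}_{-m})=\tilde{V}'_{-m}$ for all $m\geq m_0$, so $\Phi(c_k)=c'_k$ with the \emph{same} index. Your shift-$s$ bookkeeping is thus unnecessary, and moreover, as set up, it could not be repaired by ``restricting to $m$ large'': condition (a) requires $\Psi(\phi_{[\omega]_k}q_0)=\phi_{[\omega']_k}q_0$ with matching indices, and restriction does not reindex a nonzero shift.

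For $(a)\Rightarrow(c)$, your key structural claim --- that each component of $\tilde{V}\setminus\Gamma_{\omega,m}$ is a finite piece whose isomorphism type depends only on $(n,d)$ and the class (center/interior/corner) of its attaching vertex --- is false. Take $d=2$, $n=3$, $\omega=(1,2)(1,2)\cdots$: then $\tilde{V}$ contains a doubly infinite main diagonal, the trunk is only the ray toward the blow-up end, and the component attached at $c_m$ containing the outward half of the diagonal is \emph{infinite}, with structure depending on the entire tail of $\omega$. So ``the subtrees hanging at $x$ and $\Psi(x)$ are abstractly isomorphic'' is exactly the nontrivial content of $(a)\Rightarrow(c)$, not a consequence of a position-type count, and asserting it via ``abundance of self-isometries'' begs the question. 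The paper proves it by a level-by-level induction: first build $\Phi_m:\tilde{V}_{-m}\to\tilde{V}'_{-m}$ extending $\Psi$, using that $\Gamma_{\omega,m}\cap\tilde{V}_{-m}$ consists of at most two arms $\gamma_{\phi_{[\omega]_m}q_0,\phi_{[\omega]_m}q_i}$ whose number is a $\Psi$-invariant; then extend from $\tilde{V}_{-m-k}$ to $\tilde{V}_{-m-k-1}$ by a two-case analysis ($\omega_{m+k+1}=0$ versus $\omega_{m+k+1}\neq 0$, where in the second case either an arm crosses the cell or $\Gamma_{\omega,m}=\gamma_{\phi_{[\omega]_m}q_0,\phi_{[\omega]_{m+1}}q_0}\cup\Gamma_{\omega,m+1}$), maintaining compatibility with $\Psi$ at every stage, with the trivial tail $000\cdots$ treated separately. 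Some such induction is what your gluing step silently presupposes and is missing.
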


\begin{proof}
(b),(c) are equivalent by Lemma \ref{lemma53}.	We will show that (a) and (c) are equivalent.\vspace{0.15cm}

First, we show (c) implies (a). For convenience, we call $A=\phi_{[\omega]_{m+k}}F_\tau \tilde{V}_m$ a $(-m)$ cell of $\tilde{V}$ if $m,k\geq 0$ and $\tau\in W_k$. Clearly, $A$ is then a copy of $\tilde{V}_{-m}$. The same can be defined for $\tilde{V}'$. Let $\Phi:\tilde{V}\to \tilde{V}'$ be an isomorphism. We first show that $\Phi(\tilde{V}_{-m})$ is a $(-m)$ cell of $\tilde{V}'$. Let $\tilde{V}^{(-m)},\tilde{V}'^{(-m)}$ be the sparser trees, i.e.  $\tilde{V}^{(-m)}=\bigcup_{m'=m}^{\infty}\phi_{[\omega]_{m'}}\tilde{V}_{m'-m}$ and $\tilde{V}'^{(-m)}=\bigcup_{m'=m}^{\infty}\phi_{[\omega']_{m'}}\tilde{V}_{m'-m}$.  First, it is clear that $\Phi:\tilde{V}^{(-1)}\to \tilde{V}'^{(-1)}$, as one can check that the center of $(-1)$ cells are characterized as vertices with $2^d$ neighbours and each neighbour is a degree $2$ vertice. Clearly, we can view $\Phi:\tilde{V}^{(-1)}\to \tilde{V}'^{(-1)}$, and it follows that $\Phi:\tilde{V}^{(-2)}\to \tilde{V}'^{(-2)}$ by a same idea. Repeating the argument, we can see that $\Phi:\tilde{V}^{(-m)}\to \tilde{V}'^{(-m)}$ for any $m\geq 1$. It follows easily that $\Phi$ maps $\tilde{V}_{-m}$ onto a $(-m)$ cell of $\tilde{V}'$.

Since $\tilde{V}'=\bigcup_{m=0}^\infty \Phi(\tilde{V}_{-m})$, there exists $m_0\geq 0$ such that $\tilde{V}'_{0}\in \Phi(\tilde{V}_{-m_0})$, which implies $\Phi(\tilde{V}_{-m})=\tilde{V}_{-m}'$ for any $m\geq m_0$. As a consequence, 
\[\Phi(\phi_{[\omega]_k}q_0)=\phi_{[\omega']_k}q_0,\quad\forall m\geq m_0.\]
It follows immediately that $\Phi:\gamma_{\phi_{[\omega]_k}q_0,\phi_{[\omega]_l}q_0}\to \gamma_{\phi_{[\omega']_k}q_0,\phi_{[\omega']_l}q_0}$ for any $k\geq m_0$, since the self-avoiding paths are unique. (a) follows immediately. \vspace{0.15cm}

Next, we show (a) implies (c). Clearly, the result holds for the trivial case $\omega$ has a tail $000\cdots$, and we only need to consider $\omega$ with untrivial tails. Let $\Psi:\Gamma_{\omega,m}\to\Gamma_{\omega',m}$ be an isomorphism, we will extend it to $\Phi:\tilde{V}\to \tilde{V}'$. We start by showing that there is an isomorphism $\Phi_m:\tilde{V}_{-m}\to \tilde{V}'_{-m}$.  It is not hard to see that $\Gamma_{\omega,m}\cap \tilde{V}_{-m}$ consists of at most two `arms' of the form, 
\[\gamma_{\phi_{[\omega]_m}q_0,\phi_{[\omega]_m}q_i},\quad 1\leq i\leq 2^d.\]
Also, since $\Psi$ is an isomorphism from $\Gamma_{\omega,m}$ to $\Gamma'_{\omega,m}$, the numbers of the `arms' in $\Gamma_{\omega,m}\cap V_{-m}$ and $\Gamma'_{\omega,m}\cap \tilde{V}_{-m'}$ are the same. Thus, we have an extension (not unique!) $\Phi_m:\tilde{V}_{-m}\to \tilde{V}'_{-m}$ of $\Psi$. Next, we consider $n=m+1$, and show the existence of $\Phi_{m+1}:\tilde{V}_{-m-1}\to \tilde{V}'_{-m-1}$ such that 
\[\Phi_{m+1}|_{V_{-m}}=\Phi_{m},\quad \Phi_{m+1}|_{\Gamma_{\omega,m}\cap V_{-m-1}}=\Psi|_{\Gamma_{\omega,m}\cap V_{-m-1}}.\]
We consider two cases: \vspace{0.15cm}

\noindent\textit{Case 1: $\omega_{m+1}=0$.} In this case, $\phi_{[\omega]_m}q_0=\phi_{[\omega]_{m+1}}q_0$, so $\phi_{[\omega']_m}q_0=\phi_{[\omega']_{m+1}}q_0$ by the isomorphism $\Psi$. We can see that $\Gamma_{\omega,m}\cap \tilde{V}_{-m-1}$ consists of longer `arms' of the form, 
\[\gamma_{\phi_{[\omega]_{m+1}}q_0,\phi_{[\omega]_{m+1}}q_i},\quad 1\leq i\leq 2^d,\]
and the choice of arms ($i$) is the same as for $\Gamma_{\omega,m}\cap \tilde{V}_{-m}$, and we see the same for $\tilde{V}'_{-m}$. So it is clear that there exists a desired extension.\vspace{0.15cm}

\noindent\textit{Case 2: $\omega_{m+1}\neq 0$.} For this case, $\Gamma_{\omega,m+1}\cap \tilde{V}_{-m-1}$ consists of at most two arms of the form $\gamma_{\phi_{[\omega]_{m+1}}q_0,\phi_{[\omega]_{m+1}}q_i}, 1\leq i\leq 2^d$. Then, one can see that either one of the arms goes across the cell $\tilde{V}_{-m}$, or we have $\Gamma_{\omega,m}=\gamma_{\psi_{[\omega]_m}q_0,\psi_{[\omega]_{m+1}}q_0}\bigcup \Gamma_{\omega,m+1}$. In either case, a desired extension is feasible. \vspace{0.15cm}

So we get a desired extension for both cases. We can keep the same idea for $m+2,m+3$ and so on (only need to restrict to $\Psi|_{\Gamma_{\omega,m+1}},\Psi|_{\Gamma_{\omega,m+2}},\cdots$), which finally gives us an extension $\Phi$.  
\end{proof}

By proposition \ref{prop55}, we only need to study the subtree $\Gamma_{\omega,m}$.

\begin{theorem}\label{thm56}
Let $\omega,\omega'\in \Sigma$. Let $V$ ($V'$) be the Vicsek lattices associated with $\omega$ ($\omega'$). In addition, if $\omega_m\neq 0$ ($\omega'_m\neq 0$), then we write $\omega_m=(i_m,j_m)$ ($\omega'_m=(i'_m,j'_m)$).

$V$ is isomorphic to $V'$ if and only if there is $M>0$ such that (a),(b) hold for all $m\geq M$.

(a). $\omega_m=0$ if and only if $\omega'_m=0$.

(b). If $\omega_m\neq 0$, then $j_m=j'_m$.

(c). If $\omega_m\neq 0$ and $M\leq m'\leq m$, then 
\[\omega_{m''}=0\text{ or } i_{m''}=2^d+1-i_m, \quad \forall m'\leq m''<m,\]
if and only if
\[\omega'_{m''}=0\text{ or } i'_{m''}=2^d+1-i'_m, \quad \forall m'\leq m''<m.\]
\noindent 
\end{theorem}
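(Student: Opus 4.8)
The plan is to reduce everything to Proposition \ref{prop55} and then carry out a purely combinatorial analysis of the marked tree $\Gamma_{\omega,m}$. By Proposition \ref{prop55}, $V$ is isomorphic to $V'$ if and only if for some base level $M$ there is a graph isomorphism $\Psi\colon \Gamma_{\omega,M}\to\Gamma_{\omega',M}$ with $\Psi(\phi_{[\omega]_k}q_0)=\phi_{[\omega']_k}q_0$ for all $k\geq M$. Writing $c_k=\phi_{[\omega]_k}q_0$ and $c_k'=\phi_{[\omega']_k}q_0$ for the centers, the theorem becomes the problem of deciding when such a center--preserving isomorphism of the two marked trees exists, and then reading off conditions on the defining sequences. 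I would first isolate the elementary geometric facts that control $\Gamma_{\omega,M}$: that $c_k=c_{k-1}$ precisely when $\omega_k=0$; and that when $\omega_k=(i_k,j_k)\neq 0$ the subcell $\tilde{V}_{-(k-1)}$ occupies branch $i_k$, position $j_k$ of $\tilde{V}_{-k}$ and is attached to the remainder of $\tilde{V}_{-k}$ through the single antipodal corner $\phi_{[\omega]_k}q_{2^d+1-i_k}$, because in $\mcV_n^d$ the cell $F_{i,j}\mcV_n^d$ meets its inward neighbour at $F_{i,j}q_{2^d+1-i}=F_{i,j+1}q_i$. This antipodal gluing is the reason the map $i\mapsto 2^d+1-i$, rather than the full relabelling freedom of the $S_{2^d}$ symmetry, governs the answer.

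Next I would describe $\Gamma_{\omega,M}$ explicitly. Using the facts above one checks that the path joining two centers runs through every intermediate center except when consecutive nonzero branch labels are antipodal; consequently $\Gamma_{\omega,M}$ is a backbone ray carrying the centers, decorated with finite stubs, and every center is a vertex of degree one or two. The center $c_k$ is a stub (a leaf) exactly when $i_{\nu}=2^d+1-i_k$, where $\nu$ is the first index beyond $k$ with $\omega_\nu\neq 0$; otherwise $c_k$ lies on the backbone. The length of the segment of $\Gamma_{\omega,M}$ between two consecutive distinct centers $c_k,c_{k+1}$ is the graph distance inside $\tilde{V}_{-(k+1)}$ from its center to the position--$j_{k+1}$ subcell; by the $S_{2^d}$ symmetry this distance depends only on the level $k+1$ and on $j_{k+1}$, not on the branch $i_{k+1}$. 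Thus the isomorphism type of the marked tree is encoded by three pieces of data read along the sequence: the pattern of stationary steps $\omega_k=0$, the position indices $j_k$, and the antipodal relations among the branch indices.

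With this picture the two directions separate cleanly. Condition (a) is forced immediately by the center condition, since $c_{k-1}=c_k$ if and only if $c_{k-1}'=c_k'$. Condition (b) follows because the segment length between consecutive distinct centers is a metric invariant that the isomorphism preserves level by level, and by the previous paragraph equal segment lengths at a common level $k$ amount to $j_k=j_k'$. Condition (c) is the combinatorial shadow of the stub--and--backbone pattern: matching which centers are leaves, together with the way straight runs of cells along a common branch nest inside one another, is exactly the requirement that for every $m$ with $\omega_m\neq 0$ and every $m'$ the predicate ``$\omega_{m''}=0$ or $i_{m''}=2^d+1-i_m$ for all $m'\leq m''<m$'' agree for $\omega$ and $\omega'$. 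For the converse I would build $\Psi$ by the same level--by--level extension used in Proposition \ref{prop55}: assuming (a), (b), (c) from level $M$ on, extend an isomorphism $\tilde{V}_{-m}\to\tilde{V}_{-m}'$ to $\tilde{V}_{-m-1}\to\tilde{V}_{-m-1}'$, using the full $S_{2^d}$ symmetry of each cell to realize the branch relabelling demanded at the new level while keeping the arm lengths fixed by (b) and the branching pattern fixed by (c).

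The main obstacle is Condition (c), and specifically why it must be tested over the whole range $m'\leq m''<m$ rather than at a single step. The subtlety is that an isomorphism may relabel branches within each cell (the $S_{2^d}$ freedom), so individual labels $i_m$ carry no meaning; only the antipodal incidences that survive this freedom are invariant, and these organize into the maximal runs of labels equal to $2^d+1-i_m$ that condition (c) records. Making this precise requires showing that these runs determine, and are determined by, the nested stub lengths in $\Gamma_{\omega,M}$, and that a branch relabelling chosen at one level can be propagated consistently to all deeper levels exactly when the runs agree. Handling the interaction of the stationary steps $\omega_k=0$ with these runs, which must be treated as transparent when locating the next relevant branch, and verifying that the finitely many base irregularities are absorbed by the freedom to enlarge $M$, are the remaining technical points; I expect the bookkeeping of the nested arms, rather than any new idea, to be where the real work lies.
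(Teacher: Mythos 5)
Your proposal is correct and follows essentially the same route as the paper: reduction to the marked center trees $\Gamma_{\omega,m}$ via Proposition \ref{prop55}, the same geometric dictionary (condition (a) $=$ coincident consecutive centers, (b) $=$ the length of $\gamma_{\phi_{[\omega]_{m-1}}q_0,\phi_{[\omega]_m}q_0}$ being determined by $j_m$, (c) $=$ antipodal retracing, i.e.\ the subpath nesting of earlier center-to-center paths), followed by invariance for the forward direction and the same level-by-level extension of the isomorphism for the converse, with your backbone-and-stubs description merely fleshing out what the paper leaves implicit. (One small slip: the attaching corner of $\tilde{V}_{-(k-1)}$ inside $\tilde{V}_{-k}$ is $\phi_{[\omega]_{k-1}}q_{2^d+1-i_k}$ rather than $\phi_{[\omega]_k}q_{2^d+1-i_k}$, as follows from your own gluing identity $F_{i,j}q_{2^d+1-i}=F_{i,j+1}q_i$ together with $\phi_{[\omega]_{k-1}}=\phi_{[\omega]_k}F_{\omega_k}$.)
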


\begin{proof}
The theorem is easy to see as long as we understand the geometric meaning of the conditions in (a), (b) and (c):
	
\noindent In (a), $\omega_m=0$ is equivalent to the claim that $\gamma_{\phi_{[\omega]_{m-1}}q_0,\phi_{[\omega]_m}q_0}$ is empty. 

\noindent In (b), $j_m$ determines the length of $\gamma_{\phi_{[\omega]_{m-1}}q_0,\phi_{[\omega]_m}q_0}$.
 
\noindent In (c), $\omega_{m''}=0$ or $i_{m''}=2^d+1-i_m$ for any $m'\leq m''<m$ is equivalent to the claim that $\gamma_{\phi_{[\omega]_{m''-1}}q_0,\phi_{[\omega]_{m''}}q_0}$ is a subpath of $\gamma_{\phi_{[\omega]_{m'-1}}q_0,\phi_{[\omega]_{m'}}q_0}$ (in the reverse direction). 

By Proposition \ref{prop55}, if $V$ is isomorphic to $V'$, there is some $M\geq 1$ such that $\Gamma_{\omega,M}$ is isomorphic to $\Gamma_{\omega',M}$. So (a),(b),(c) hold for their geometric meaning. 

Next, assuming (a),(b),(c) hold for $m$ and $\bigcup_{M\leq k,l\leq m-1} \gamma_{\phi_{[\omega]_k}q_0,\phi_{[\omega]_l}q_0}$ is isomorphic to $\bigcup_{M\leq k,l\leq m-1} \gamma_{\phi_{[\omega']_k}q_{0},\phi_{[\omega']_l}q_{0}}$, then we have $\bigcup_{M\leq k,l\leq m} \gamma_{\phi_{[\omega]_k}q_{0},\phi_{[\omega]_l}q_{0}}$ is isomorphic to $\bigcup_{M\leq k,l\leq m} \gamma_{\phi_{[\omega']_k}q_{0},\phi_{[\omega']_l}q_{0}}$, since we can extend the isomorphism to the new edges. So the other direction is proven.
\end{proof}

\bibliographystyle{amsplain}

\end{document}